\def\a{\alpha}		 \def\b{\beta}		  
\def\d{\delta}		 \def\De{{\Delta}}		 \def\e{\varepsilon}
\def\la{\lambda}	 		\def\O{\Omega}
\def\s{\sigma}		 \def\t{\theta}		  \def\f{\varphi}
		 \def\r{\rho}		  \def\z{\zeta}
	 \def \cb{{\mathcal B}}
	 \def \cd{{\mathcal D}}
	 \def \ch{{\mathcal H}}
\def\C{{\mathbb C}}	 \def\D{{\mathbb D}}
	 \def\N{{\mathbb N}}
\def\T{{\mathbb T}}	 
\def\({\left(}		 \def\){\right)}
\newtheorem{prop}{\sc Proposition}
\newtheorem{lem}{\sc Lemma}
\newtheorem{thm}{\sc Theorem}
\newtheorem{cor}{\sc Corollary}
\newtheorem{ex}{\sc Example}
\begin{document}
\title[Composition operators from the Bloch space]
{Compact and weakly compact composition operators from the Bloch
space into M\"obius invariant spaces}

\author[M.D. Contreras]{Manuel D. Contreras}
\address{Departamento de Matem\'atica Aplicada II and IMUS, Universidad de Sevilla, Camino de los Descubrimientos s/n, 41092 Seville, Spain}
\email{contreras@us.es}
 \urladdr{http://personal.us.es/contreras/}

\author[S. D\'{\i}az-Madrigal]{Santiago D\'{\i}az-Madrigal}
\address{Departamento de Matem\'atica Aplicada II and IMUS, Universidad de Sevilla, Camino de los Descubrimientos s/n, 41092 Seville, Spain}
\email{madrigal@us.es}

\author[D. Vukoti\'c]{Dragan Vukoti\'c}
\address{Departamento de Matem\'aticas, M\'odulo 17, Facultad de Ciencias, Universidad Aut\'onoma de Madrid, 28049 Madrid, Spain} \email{dragan.vukotic@uam.es}
 \urladdr{http://www.uam.es/dragan.vukotic}
\subjclass[2010]{47B33, 30D45, 47A30, 30C80}
\dedicatory{Dedicated to Fernando P\'erez-Gonz\'alez on the occasion of his 60th birthday}
\date{Revised version: January 24, 2014}
\thanks{The first two authors were supported by MTM2012-37436-C02-01  from MINECO, Spain and FEDER (European Regional Development Fund) and the third author by MTM2012-37436-C02-02 from MINECO, Spain.}
\begin{abstract}
We obtain exhaustive results and treat in a unified way the question of boundedness, compactness, and weak compactness of composition operators from the Bloch space into any space from a large family of conformally invariant spaces that includes the classical spaces like $BMOA$, $Q_\a$, and analytic Besov spaces $B^p$. In particular, by combining techniques from both complex and functional analysis, we prove that in this setting weak compactness is equivalent to compactness. For the operators into the corresponding ``small'' spaces we also characterize the boundedness and show that it is equivalent to compactness.
\end{abstract}
\maketitle
 \tableofcontents
\section{Introduction}
\subsection{Boundedness and compactness of composition operators}
 \label{subsec-overview}
By a \textit{self-map\/} of the unit disk $\D$ we will mean an  analytic function $\f$ from the unit disk $\D$ into itself. Every  self-map of $\D$ induces the \emph{composition operator\/} $C_\f$ with \emph{symbol\/} $\f$ by the formula $C_\f(f)= f\circ\f$ on the set of all analytic functions in $\D$ but it is often of interest to consider $C_\f$ as an operator between Banach spaces of analytic functions. For several classical spaces of analytic functions such as a Hardy space $H^p$, a Bergman space $A^p$, or the Bloch space $\cb$, any symbol $\f$ gives rise to a bounded operator $C_\f$ from the space into itself. However, this is not the case for the Dirichlet space or for more general analytic Besov spaces  $B^p$, so the question of deciding which $\f$ induce a bounded operator $C_\f$ is of interest. The situation becomes more complicated if we consider composition operator acting between two different spaces.
\par
A related problem is to characterize all compact or weakly compact operators $C_\f$ between two given spaces in terms of the symbols $\f$. Criteria for compactness of $C_\f$ when acting on Hardy and Bergman spaces (due to J.H. Shapiro and B. MacCluer) are now already considered a classical knowledge; see \cite{CMC}, \cite{Sh}. For compact operators acting on $\cb$ and on the little Bloch space $\cb_0$ we refer the reader to \cite{MM}, \cite{MR}, and \cite{CHD}. Related results regarding composition operators from $\cb$ into the Dirichlet space $\cd$ or the more general analytic Besov spaces $B^p$ can be found in \cite{M1}, \cite{M2}, and \cite{Z}. Composition operators from $\cb$ into Hardy spaces were treated in \cite{PGX} while those from the Bloch space into the conformally invariant subspaces $BMOA$ and $VMOA$ of Hardy spaces and other spaces were studied in \cite{T} and \cite{MT}. For composition operators from $\cb$ into $Q_p$-type spaces we refer the reader to \cite{SZ}. Weak compactness of composition operators on vector-valued versions of classical spaces of analytic functions have been considered in \cite{BDL}, for example.
\par
Obviously, there are quite a few papers on the subject but it turns out that many similar setups are treated in an isolated way and many proofs are essentially repeated while it looks desirable to show the ``bigger picture''. One purpose of this paper is precisely to treat such questions globally, for those $C_\f$ that map the Bloch space into other spaces. We would like to underline that our work also provides new results in the case when the target space is one of the many rather classical Banach spaces.
\subsection{Conformally invariant spaces and weak compactness}
 \label{subsec-conf-weak}
It is traditional in the literature to consider the spaces $X$ which
are M\"obius invariant, \textit{i.e.}, those whose seminorm $s$ has the following property: $s(f\circ\s)\le C\,s(f)$, $f\in X$, for some fixed constant $C$ and all disk automorphisms $\s$. These spaces were given a systematic treatment in the seminal paper \cite{AFP} which was also pioneering in the theory of composition operators acting on them. This family of spaces includes the Bloch space $\cb$, the little Bloch space $\cb_0$, and analytic Besov spaces denoted $B^p$. We also mention the important spaces $BMOA$ (a variant of the classical John-Nirenberg space $BMO$) and $VMOA$ (introduced by Sarason; see \cite{G}), both M\"obius-invariant subspaces of the Hardy space $H^2$. The classical Hardy and Bergman spaces, however, do not satisfy the requirements for belonging to this family.
\par
The question whether the weak compactness of a composition operator  acting between two conformally invariant spaces of analytic functions  is actually equivalent to its compactness has generated considerable interest among the experts. For the composition operators on $BMOA$ or $VMOA$ or between these spaces, this question was posed (in its different versions) by Bourdon, Cima, and Matheson \cite{BCM}, \cite{CM}, by Laitila \cite{L}, and also by Tjani. An affirmative answer has been given recently by Laitila, Nieminen, Saksman, and Tylli \cite{LNST}, where the authors used some functional analysis tools such as the Bessaga-Pe\l czy\'nski selection principle.
\par
It is important to notice that there exist weakly compact composition operators acting on other function spaces which are not compact. An example of such $C_\f$, induced by a lense map $\f$, was given in \cite{LLQR}.
\subsection{Aims of this paper}
 \label{subsec-aims}
\par
The idea of considering the largest conformally invariant subspace of a given Banach space of analytic functions has already been considered in the literature for some time. Two relevant sources are \cite{A} and \cite{X1}. Significant motivation for our work comes from the approach adopted by Aleman and Simbotin (Persson); see \cite{AS} or \cite{Si}. \par
In the present paper we consider three fairly large families of spaces of analytic functions: the spaces $D^p_\mu$ defined in terms of integrability of the derivative of a function with respect to a certain Borel measure $\mu$, their conformally invariant subspaces $M(D^p_\mu)$, and the small subspaces $M_0(D^p_\mu)$. We defer until Subsection~\ref{subsec-conf-inv-sp} their precise definitions which coincide with those given in \cite{AS} or \cite{Si}. These families include various types of well-known spaces:
\par
1) the Hardy space $H^2$ and all weighted Bergman and Dirichlet-type spaces,
\par
2) their M\"obius invariant subspaces such as $BMOA$, $\cb$, analytic Besov spaces, and $Q_p$ spaces, and
\par
3) the small subspaces of the above spaces such as $VMOA$, $\cb_0$ or $Q_{p,0}$.
\par
It should also be remarked that families of ``large'' and ``small'' spaces defined by means of oscillation and density of polynomials in them (which is also discussed here) were considered in Perfekt's recent paper \cite{P}.
\par\smallskip
The main purpose of this paper is to present a unified approach to characterizing all bounded, compact, and weakly compact composition operators from $\cb$ into any of the spaces belonging to the family mentioned above. Our principal result shows that every weakly compact composition operator from $\cb$ into any space $M(D^p_\mu)$ is actually compact. We also generalize a number of existing but scattered results and add some new results. For instance, we characterize the compact and weakly compact operators from the Bloch space into the space $BMOA$, a question that has been left open in the literature. We do this by using a combination of complex analysis arguments and Banach space techniques that seem new in this context.
\par
Part of the motivation for our approach to compactness comes from Xiao's treatment \cite[Section~2.2]{X2}. However, to the best of our knowledge, our approach to weak compactness  appears to be different from the methods used in the existing literature.
\subsection{Main results}
 \label{subsec-results}
First of all, we characterize completely and in terms of the hyperbolic derivative of the symbol $\f$ all bounded and compact composition operators $C_\f$ from the Bloch space $\cb$ into any of the general spaces $D^p_\mu$, $M(D^p_\mu)$, and $M_0(D^p_\mu)$ considered in the paper. It turns out that whenever $C_\f\,\colon\,\cb\to D^p_\mu$ or $C_\f\,\colon\,\cb\to M_0(D^p_\mu)$, the compactness of $C_\f$ follows ``for free'' (after some work).
\par
Our Theorem~\ref{thm-cpt-inv} (see Section~\ref{sec-cpt-ops}) describes the compact composition operators from $\cb$ into the invariant space $M(D^p_\mu)$ and shows that, in this case, weak compactness is equivalent to compactness. The proof is based on a theorem of Banach-Saks type from functional analysis and techniques from function spaces. The result is accompanied by appropriate examples.
\par
Another relevant point (see Section~\ref{sec-little-sp}, Theorem~\ref{thm-dens-polyn}) is a rigorous and detailed proof that, for all natural radial measures of certain type, the polynomials are dense in the small subspace $M_0(D^p_\mu)$ of the conformally invariant space $M(D^p_\mu)$, in analogy with the classical cases. This provides a wide range of examples where the separability hypothesis of our Theorem~\ref{thm-small} is satisfied. This last result characterizes the bounded and compact composition operators from the Bloch space $\cb$ into the small spaces $M_0(D^p_\mu)$.
\subsection*{Acknowledgments}
 \label{subsec-acknowledge}
The authors are grateful to the anonymous referee for reading carefully the first version of the manuscript and pointing out several misprints, as well as for suggesting some additional references and making some useful remarks that helped improve the exposition.
\par\medskip
\section{A family of conformally invariant spaces}
 \label{fam-conf-inv}

\subsection{Notation}
 \label{subsec-notation}
In what follows, $\D$ will denote the unit disk in the complex plane: $\D = \{z\in\C\,\colon\,|z|<1\}$ and $dA$ will denote the normalized Lebesgue area measure on $\D$:
$$
 dA(z)=\frac1{\pi}\,dx\,dy=\frac1{\pi}\,r\,dr\,d\t\,, \quad z=x+yi=r e^{i\t}\,.
$$
By a \emph{disk automorphism}, we will mean  a one-to-one analytic mapping of $\D$ onto itself. The set of all such maps, $Aut(\D)$, is a transitive group under composition. As is well known, every $\s\in Aut(\D)$ has the form
\begin{equation}
 \s(z)= \la \frac{a-z}{1-\overline{a} z}\,, \quad |\la|=1\,, \quad |a|<1\,.
 \label{autom}
\end{equation}
An important property of disk automorphisms is that they yield equality in
the Schwarz-Pick lemma:
\begin{equation}
 (1-|z|^2) |\s^\prime (z)| = 1-|\s(z)|^2\,, \quad z\in\D\,.
 \label{ident-aut}
\end{equation}
\par
Throughout the paper, we shall always consider a positive Borel
measure $\mu$ on $\D$. A typical example is
$$
 d\mu(z)=(1-|z|^2)^\a dA(z)\,,
$$
a measure which is finite if and only if $-1<\a<\infty$. Another example is
$$
 d\mu(z)=\log^\a \frac1{|z|} dA(z)\,.
$$
Note that for $z$ near the unit circle the function $\log^\a \frac1{|z|}$ behaves asymptotically like $(1-|z|^2)^\a$. In principle, our measures are not assumed to be of the form $h(|z|)\,dA(z)$, where $h$ is some integrable positive function on $[0,1)$ like in the above examples. However, the result in the last section will mostly be displayed for measures that satisfy this assumption.
\par\medskip
\subsection{The basic conformally invariant spaces}
 \label{subsec-conf-inv-sp}
Throughout the paper, we will use $\ch(\D)$ to denote the set of all functions analytic in $\D$. A function $f\in\ch(\D)$ is said to belong to the \textit{Bloch space\/} $\cb$ if its \textit{invariant derivative\/}: $(1-|z|^2) |f^\prime(z)|$ is bounded in $\D$. The name comes from the fact that this quantity does not change under a composition with any $\s\in Aut(\D)$ in view of our formula \eqref{ident-aut}. The Bloch space becomes a Banach space when equipped with the norm
$$
 \|f\|_\cb = |f(0)|+ \sup_{z\in\D} (1-|z|^2) |f^\prime(z)| \,.
$$
Every function in $\cb$ satisfies the standard growth condition:
\begin{equation}
 |f(z)| \le \(1+\frac12 \log\frac{1+|z|}{1-|z|}\)\,\|f\|_\cb \,, \quad z\in\D \,.
 \label{bl-growth}
\end{equation}
\par
Given a positive Borel measure $\mu$ on $\D$ and $p\in [1,\infty)$, we can define the \textit{weighted Dirichlet-type spaces\/} $D^p_\mu$ in the usual way:
$$
 D^p_\mu = \left\{f\in\ch(\D)\,\colon\,\left\Vert f\right
 \Vert _{D^p_\mu}^{p}:=\left\vert f(0)\right\vert ^{p}+\int_{\D}
 \left\vert f^{\prime }\right\vert^{p}\,d\mu<\infty\right\} \,.
$$
Consider the point evaluation functionals $\phi_\z$, defined by $\phi_\z (f) = f(\z)$, for $\z\in\D$. It is natural to require the following axioms to hold:
\begin{itemize}
\item
 $D^p_\mu$ is a Banach space;
\item
 The point-evaluation functional $\phi_\zeta$ is bounded on $D^p_\mu$ for each $\z\in\D$.
\end{itemize}
In view of the uniform boundedness principle, these two requirements can be summarized in one single axiom:
\begin{itemize}
\item
\emph{The point-evaluation functionals are uniformly bounded on $D^p_\mu$ on compact subsets of $\,\D$.}
\end{itemize}
\par
Following the notation used, for example, in \cite{AS}, we define the \textit{M\"obius invariant subspace\/} $M(D^p_\mu)$ as the space of
all functions $f$ in $\ch(\D)$ such that
$$
 \left\Vert f \right\Vert _{M(D^p_\mu)}^{p}:=\left\vert f(0)\right\vert^{p} +  \sup_{\s\in Aut(\D)}\int _{\D}\left\vert (f\circ\s)^{\prime} \right\vert ^{p}\,d\mu <\infty\,.
$$
We also define the corresponding \textit{little invariant subspaces\/}:  $$
 M_0(D^p_\mu) = \left\{ f\in M(D^p_\mu)\,\colon\,\lim_{|\s(0)|
 \to 1,\,\s\in Aut(\D)} \,\int_{\D} \left\vert (f\circ\s)^{\prime}\right\vert^{p}\,d\mu = 0 \right\} \,.
$$
A few remarks are in order:
\begin{itemize}
\item
It is routine to verify that $s(f)=\sup_{\s\in Aut(\D)} \(\int_{\D}\left\vert (f\circ\s)^{\prime}\right\vert ^{p}\,d\mu\)^{1/p}$ defines a seminorm on $M(D^p_\mu)$ and $\left\Vert \cdot \right\Vert_{M(D^p_\mu)}$ has all the properties of a norm.
\item
Since $\{\tau\circ \s\,\colon\,\s\in Aut(\D)\}=Aut(\D)$ holds for any fixed $\tau\in Aut(\D)$, it follows that the $s(f\circ\tau)= s(f)$. In other words, this seminorm is conformally invariant.
\item
Since the identity map of $\D$ is trivially a disk automorphism, it is immediate that $M(D^p_\mu)\subset D^p_\mu$. It actually follows  from our previous comment that $M(D^p_\mu)$ is the largest conformally invariant subspace of  $D^p_\mu$.
\item
Note that we actually require that $f\in M(D^p_\mu)$ in the definition of $M_0(D^p_\mu)$ since it is not obvious, even for somewhat special measures $\mu$, that the assumption
$$
 \lim_{|\s(0)|\to 1,\,\s\in Aut(\D)} \,\int_{\D} \left\vert (f\circ\s)^{\prime}\right\vert ^{p}\,d\mu = 0
$$
implies that
$$\sup_{\s\in Aut(\D)}\int _\D \left\vert (f\circ\s)^{\prime}
 \right\vert ^{p}\,d\mu <\infty\,.
$$
\item
Assuming the uniform boundedness of point evaluations in $D^p_\mu$ on compact subsets of $\D$, by a standard normal families argument and Fatou's lemma one can deduce the completeness of $M(D^p_\mu)$. It is easily checked that $M_0(D^p_\mu)$ is a closed subspace of $M(D^p_\mu)$, so it is also complete.
\item
It is not difficult to see that each one of the spaces defined above contains sufficiently many functions for most ``reasonable'' measures $\mu$. For example, if $\mu$ is a finite measure then every function analytic in a disk larger than $\D$ and centered at the origin is readily seen to belong to $M(D^p_\mu)$. We shall discuss the membership and density of the polynomials in $M_0(D^p_\mu)$ in the final section of the paper.
\item
In several papers only the involutive automorphisms are considered: $\s_a(z)=(a-z)/(1-\overline{a}z)$, $a\in\D$, requiring that $|a|\to 1$ in the definitions of the special small spaces. Here we have opted for the full generality and for considering the entire automorphism group, which adds certain technical difficulties to some proofs.
\end{itemize}
\par\medskip
\subsection{Classical function spaces as examples}
 \label{subsec-conf-inv-ex}
An appropriate choice of $\mu$ in the above definitions of our spaces $D^p_\mu$, $M(D^p_\mu)$, and $M_0(D^p_\mu)$ yields a number of well-known spaces of analytic functions in the disk as special cases. Here is a list of some important examples.
\par\smallskip
(\textbf{A}) In view of the well-known Littlewood-Paley identity \cite[p.~51]{Sh}:
$$
 \|f\|_{H^2}^2 = |f(0)|^2 + 2 \int_\D |f^\prime(z)|^2 \log\frac{1}{|z|}\,dA(z)\,,
$$
the Hardy space $H^2$ can be seen as a $D^2_\mu$ space by choosing $d\mu(z)=\log \frac{1}{|z|^2}\,dA(z)$. Its conformally invariant subspace $M(D^2_\mu)$ is the well-known $BMOA$ space of analytic functions of bounded mean oscillation and the corresponding space  $M_0(D^2_\mu)=VMOA$, the space of functions of vanishing mean oscillation; see \cite{G} for more about these space. It should be remarked that in this case our definition involving all possible disk automorphisms coincides with the usual one that takes into account only the involutive automorphisms $\s_a$ mentioned above in view of rotation invariance of the measure $\mu$.
\par\smallskip
(\textbf{B}) The analytic Besov spaces $B^p$, $1<p<\infty$, are obtained as $D^p_\mu$ spaces by choosing $d\mu(z)=(p-1) (1-|z|^2)^{p-2} dA(z)$, $1<p<\infty$. See \cite{AFP} or \cite{Z1} for more about these spaces.
Note that, in this case, combining the simple change of variable $w=\s(z)$, $dA(w)=|\s^\prime (z)|^2 dA(z)$ with \eqref{ident-aut} shows that
$$
 \int _{\D}\left\vert (f\circ\s)^{\prime} (z) \right\vert ^{p}\,d\mu (z) = \int _{\D}\left\vert f^{\prime} (w) \right\vert ^{p}\,d\mu (w)
$$
so it is immediate that here $D^p_\mu=M(D^p_\mu)$ while the corresponding space $M_0(D^p_\mu)$ is trivial (consisting only of the constant functions).
\par\smallskip
(\textbf{C}) The Bergman spaces $A^p$, $1\le p<\infty$, can be obtained by taking $d\mu(z)=(1-|z|^2)^p dA(z)$. Well-known (but too lengthy to repeat here) arguments using the Cauchy integral formula and Minkowski's inequality in its integral form as in \cite[Chapter~5]{D} show that the norm in our definition is equivalent to the standard Bergman norm:
$$
 \left\vert f(0)\right\vert ^{p}+\int_{\D} |f^{\prime} (z)|^p \,(1-|z|^2)^p dA(z) \asymp \int_\D |f(z)|^p\,dA(z)
$$
(meaning that each of the two sides is bounded by a constant multiple of the other, this multiple being independent of $f$).
In this case it turns out that $M(D^p_\mu)=\cb$ and $M_0(D^p_\mu)=\cb_0$, the little Bloch space (the closure of polynomials in $\cb$), as was shown by Axler \cite{A}.
\par\smallskip
(\textbf{D}) The $Q_\a$ spaces, defined by Aulaskari, Xiao, and Zhao \cite{AXZ} and studied by other authors as well (see \cite{X2} for an extensive account), can be seen as $M(D^p_\mu)$ spaces by taking $p=2$, $d\mu(z)=\log^\a \frac1{|z|} dA(z)$, $0<\a<\infty$. An equivalent norm is obtained by choosing $d\mu = (1-|z|)^\a\, dA(z)$ instead. (Note that we will use the notation $Q_\a$ rather than the traditional $Q_p$ because here $p=2$ is fixed and the exponent $\a$ from the weight is the one that determines the space.) It is well known that $Q_\a$ coincides as a set with $\cb$ (but is, of course, endowed with a different norm) whenever $\a>1$ and with $BMOA$ when $\a=1$, while it is an entirely different space when $0<\a<1$. The corresponding small space $M_0(D^p_\mu)$ is the space usually denoted as $Q_{\a,0}$ and $Q_{1,0}=VMOA$.
\par\medskip
\section{Bounded composition operators}
 \label{sec-bded-ops}

\subsection{Preliminaries}
 \label{subsec-prelim}
The following lemma in the case $p=1$ has been proved explicitly by Ramey and Ullrich \cite{RU} although the argument can probably be traced back to Ahern and Rudin.
\begin{lem}
 \label{lem-ur}
Let $1\le p<\infty$. There exist two functions $f$ and $g$ in the
Bloch space $\cb$ and a positive constant $C$ such that
$$
 |f^\prime(z)|^p + |g^\prime (z)|^p \ge \frac{C}{(1-|z|^2)^p}
$$
for all $z$ in $\D$.
\end{lem}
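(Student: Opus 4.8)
The plan is to reduce first to the case $p=1$ and then to build $f$ and $g$ from lacunary series. For the reduction, observe that the Bloch condition ``$(1-|z|^2)|h'(z)|$ bounded'' does not involve $p$, so a single pair $f,g$ will serve for every $p$. If I can produce $f,g\in\cb$ and a constant $C_0>0$ with $|f'(z)|+|g'(z)|\ge C_0/(1-|z|^2)$ for all $z\in\D$, then the elementary convexity inequality $(a+b)^p\le 2^{p-1}(a^p+b^p)$, valid for $a,b\ge0$ and $p\ge1$, gives
$$
 |f'(z)|^p+|g'(z)|^p\ge 2^{1-p}\bigl(|f'(z)|+|g'(z)|\bigr)^p\ge \frac{2^{1-p}C_0^p}{(1-|z|^2)^p}\,,
$$
which is the assertion with $C=2^{1-p}C_0^p$. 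Hence it suffices to treat $p=1$.

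For the construction I would use Hadamard-gap series on the frequencies $2^k$. Recall the elementary estimate $\sum_{k\ge0}2^k r^{2^k}\asymp (1-r)^{-1}$ as $r\to1^-$; combined with the Schwarz--Pick identity \eqref{ident-aut} it shows that any lacunary series $\sum_k\beta_k z^{2^k}$ with bounded coefficients lies in $\cb$, the quantity $\sup_{z}(1-|z|^2)|h'(z)|$ being controlled by $\sup_k|\beta_k|$. I would take $f(z)=\sum_{k\ge0}z^{2^k}$ and a second series $g(z)=\sum_{k\ge0}\beta_k z^{2^k}$ on the \emph{same} frequencies but with a periodically modulated unimodular coefficient pattern, for instance $\beta_k=i^{k}$. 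Both then belong to $\cb$, and automatically $|f'|,|g'|\le C/(1-|z|^2)$, so only the lower bound remains.

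The heart of the matter, and the main obstacle, is the pointwise lower bound for \emph{every} $z=re^{i\theta}$, not merely on average. Fix $r$ and let $N\approx\log_2\bigl(1/(1-r)\bigr)$ be the critical index at which $r^{2^N}\approx e^{-1}$. The weights $c_k:=2^k r^{2^k}$, through $|f'(re^{i\theta})|=r^{-1}\bigl|\sum_k c_k e^{i2^k\theta}\bigr|$, are concentrated in a window of bounded length around $k=N$, each dominant term having size $\asymp 2^N\asymp(1-r)^{-1}$ and $\sum_k c_k\asymp(1-r)^{-1}$; the same weights govern $g'$. Writing $t=e^{i2^{N-m}\theta}$, the phases inside the window are the rigidly linked quantities $t,t^2,t^4,\dots$, so $f'$ and $g'$ are, up to negligible tails, two fixed linear combinations of the $c_k t^{2^{k-(N-m)}}$ with coefficient patterns $(1,1,\dots)$ and $(i^k)_k$. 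The claim is that no unit $t$ can make both combinations small relative to $\sum_k c_k$: the two coefficient patterns are ``transverse'', and the doubling structure of the exponents prevents simultaneous cancellation.

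I expect the delicate point to be making this non-cancellation \emph{uniform} in $r$ and $\theta$. One clean route is a compactness argument: as $r$ varies, the normalized weight profile $\{c_k/\sum_j c_j\}$ ranges over a compact family of admissible ``shapes'', and $t$ ranges over the compact circle $\{|t|=1\}$; on this compact parameter space the continuous functional $\max\bigl(|f'|,|g'|\bigr)/\sum_k c_k$ has no zeros, since an exact simultaneous vanishing would force an algebraic relation among $t,t^2,t^4,\dots$ incompatible with $|t|=1$, whence the functional is bounded below by some $\delta>0$. This yields $|f'(re^{i\theta})|+|g'(re^{i\theta})|\ge \delta\sum_k c_k\ge C_0/(1-r)$ uniformly in $\theta$ and $r$, completing the case $p=1$ and hence, by the first paragraph, the lemma.
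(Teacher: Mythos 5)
Your reduction to $p=1$ via $(a+b)^p\le 2^{p-1}(a^p+b^p)$ is correct and is essentially the same step the paper takes, and the membership in $\cb$ of lacunary series with bounded coefficients is standard. The genuine gap is at the heart of the construction: the assertion that the coefficient patterns $(1,1,1,\dots)$ and $(i^k)$ are ``transverse,'' so that the two weighted sums $\sum_k c_k t^{2^k}$ and $\sum_k c_k i^k t^{2^k}$ cannot both be small, is exactly the content of the lemma, and you never prove it. Your compactness argument is a sound mechanism for upgrading a no-common-zero statement to a uniform lower bound, but it only \emph{reduces} the problem to that statement on the limiting parameter space (a profile parameter, a solenoid of phases $(t_j)_{j\in\Z}$ with $t_{j+1}=t_j^2$, and the four rotations of the pattern $(i^j)$ coming from the critical index mod $4$). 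The one-line justification offered there --- that simultaneous vanishing ``would force an algebraic relation among $t,t^2,t^4,\dots$ incompatible with $|t|=1$'' --- is not an argument: unimodular numbers satisfy no such incompatibility, and a lacunary trigonometric sum with positive, concentrated weights certainly \emph{does} have zeros (this is precisely why a single lacunary series fails and two functions are needed). Ruling out \emph{common} zeros of your two specific combinations, uniformly over all admissible profiles and solenoid points, is a genuinely hard open step in your write-up; nothing in the doubling structure visibly prevents such a coincidence. Note also that the lemma demands the bound for \emph{all} $z\in\D$, so you must in addition exclude a common zero of $f'$ and $g'$ at interior points, which your boundary-asymptotic discussion does not address.

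The paper avoids this difficulty entirely by invoking \cite[Proposition~5.4]{RU}: there the frequencies are taken with very large gap ratio (say $K^k$ with $K$ large, the two functions using interlaced frequency sets), so that on each annulus of a suitable partition of $\D$ a \emph{single} term of one derivative dominates, in modulus, the sum of all remaining terms; cancellation is then impossible on that annulus, one function being large on the odd-numbered annuli and the other on the even-numbered ones, and together the annuli cover $\D$. This single-term-dominance idea is exactly what your ratio-$2$ gaps destroy: with frequencies $2^k$ a whole bounded window of terms is of comparable size, and one must then control cancellation within the window, which is the unproven step. The simplest repair of your argument is to abandon the same-frequency/different-pattern idea and instead use disjoint, widely separated frequency sets as in Ramey--Ullrich, where no transversality claim is needed.
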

The proof follows by \cite[Proposition~5.4, p.~601]{RU}. The key point is to select a partition of the disk into two sets of concentric annuli centered at the origin and two lacunary series, one of which takes on large enough values:
$$
 |f^\prime(z)| \ge \frac{A}{1-|z|^2}
$$
on the odd-numbered annuli and the other does the same on the
even-numbered annuli. This takes care of the case $p=1$; for arbitrary $p\ge 1$ the statement follows readily by the standard inequality $(a+b)^p\le 2^p (a^p+b^p)$, where $a$, $b\ge 0$.
\par\medskip
It will be convenient to use the following version of the \emph{hyperbolic derivative\/} of an analytic self-map $\f$ of $\D$:
$$
 \f^{\#}(z)=\dfrac{\left\vert \f^{\prime}(z)\right\vert}{1-
 \left\vert \f(z)\right\vert ^{2}}\,.
$$
It should be noted that there is another related quantity also called hyperbolic derivative but only the above expression will be useful for our purpose.

\subsection{Characterizations of boundedness}
 \label{subsec-bounded}
\par
We state two basic facts which characterize the bounded composition operators from $\cb$ into $D^p_\mu$ and into $M(D^p_\mu)$ respectively. The proofs of such facts are relatively straightforward and have by now become standard in the literature. We record them here only for the sake of completeness.
\par\smallskip
\begin{prop}
 \label{prop-bded-basic}
The following statements are equivalent:
\begin{itemize}
 \item[(a)] $C_\f\colon \cb\to D^p_\mu$;
 \item[(b)] $C_\f$ is a bounded operator from $\cb$ into
	 $D^p_\mu$;
 \item[(c)] $\int_\D |\f^\#|^p\,d\mu<\infty$.
\end{itemize}
\end{prop}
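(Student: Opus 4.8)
I want to prove the three-way equivalence (a) $\Leftrightarrow$ (b) $\Leftrightarrow$ (c). The implication (b) $\Rightarrow$ (a) is trivial, since boundedness presupposes that $C_\f$ maps $\cb$ into $D^p_\mu$. So the real content is (a) $\Rightarrow$ (c), which contains the complex-analytic heart of the matter, and (c) $\Rightarrow$ (b), which is a routine estimate together with an appeal to the closed graph theorem. I would organize the proof around the quantity
$$
 \int_\D |(f\circ\f)'|^p\,d\mu = \int_\D |f'(\f(z))|^p\,|\f'(z)|^p\,d\mu(z),
$$
which is exactly $\|C_\f f\|_{D^p_\mu}^p$ up to the boundary term $|f(\f(0))|^p$, and relate it to the hyperbolic-derivative integral $\int_\D |\f^\#|^p\,d\mu$.

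\medskip

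\noindent\textbf{Proof of (c) $\Rightarrow$ (b).} First I would assume (c) and show $C_\f$ is bounded. For $f\in\cb$ and any $z\in\D$, the chain rule and the definition of the Bloch norm give
$$
 |(f\circ\f)'(z)| = |f'(\f(z))|\,|\f'(z)| \le \|f\|_\cb\,\frac{|\f'(z)|}{1-|\f(z)|^2} = \|f\|_\cb\,\f^\#(z),
$$
since $(1-|\f(z)|^2)\,|f'(\f(z))| \le \|f\|_\cb$. Raising to the $p$-th power, integrating against $d\mu$, and adding the control of $|f(\f(0))|$ coming from the growth estimate \eqref{bl-growth} (at the fixed point $\f(0)\in\D$), I obtain
$$
 \|C_\f f\|_{D^p_\mu}^p \le C_1\,\|f\|_\cb^p + \|f\|_\cb^p \int_\D |\f^\#|^p\,d\mu,
$$
so that (c) yields both that $C_\f f\in D^p_\mu$ for every $f$ and the norm bound $\|C_\f f\|_{D^p_\mu}\le C\,\|f\|_\cb$. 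This is the boundedness in (b).

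\medskip

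\noindent\textbf{Proof of (a) $\Rightarrow$ (c).} This is where I expect the main obstacle, and where Lemma~\ref{lem-ur} is decisive. Assume merely that $C_\f$ maps $\cb$ into $D^p_\mu$. I would first upgrade this to boundedness by the closed graph theorem: if $f_n\to f$ in $\cb$ and $C_\f f_n\to g$ in $D^p_\mu$, then since convergence in either norm forces uniform convergence on compact subsets (using \eqref{bl-growth} for $\cb$ and the uniform boundedness of point evaluations for $D^p_\mu$), we get $g=f\circ\f=C_\f f$, so $C_\f$ is closed, hence bounded. Now apply $C_\f$ to the two Bloch functions $f$ and $g$ furnished by Lemma~\ref{lem-ur}. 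Boundedness gives
$$
 \int_\D |f'(\f(z))|^p\,|\f'(z)|^p\,d\mu(z) < \infty, \qquad \int_\D |g'(\f(z))|^p\,|\f'(z)|^p\,d\mu(z) < \infty,
$$
and summing these two finite integrals while invoking the pointwise lower bound $|f'(w)|^p + |g'(w)|^p \ge C(1-|w|^2)^{-p}$ at $w=\f(z)$ yields
$$
 \int_\D \frac{|\f'(z)|^p}{(1-|\f(z)|^2)^p}\,d\mu(z) = \int_\D |\f^\#|^p\,d\mu \le \frac{1}{C}\,\bigl(\|C_\f f\|_{D^p_\mu}^p + \|C_\f g\|_{D^p_\mu}^p\bigr) < \infty,
$$
which is precisely (c). The cleverness is entirely concentrated in the existence of the test pair $(f,g)$: a single Bloch function can have $|f'|$ small on a large set, so no single function controls $(1-|\f|^2)^{-p}$ from below, and the lacunary-series construction behind Lemma~\ref{lem-ur} is exactly what circumvents this. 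The one technical point to watch is justifying the closed-graph step via locally uniform convergence, but this follows cleanly from the two normal-families facts already recorded in the excerpt.
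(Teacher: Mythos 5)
Your proposal is correct and follows essentially the same route as the paper: the implication (a)\,$\Rightarrow$\,(c) rests on the same Ramey--Ullrich pair from Lemma~\ref{lem-ur} evaluated at $\f(z)$, and (c)\,$\Rightarrow$\,(b) uses the same Bloch-seminorm estimate together with the growth bound \eqref{bl-growth} at $\f(0)$. The only divergence is your closed-graph detour, which is superfluous: hypothesis (a) already places the two particular functions $f\circ\f$ and $g\circ\f$ in $D^p_\mu$, so $\|f\circ\f\|_{D^p_\mu}$ and $\|g\circ\f\|_{D^p_\mu}$ are finite, and that is all the summed estimate requires --- the paper never invokes operator boundedness in this direction.
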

\begin{proof}
It suffices to verify the following short chain of implications: (a)\,$\Rightarrow$\,(c)\,$\Rightarrow$\, (b)\,$\Rightarrow$\,(a).
\par\medskip
\fbox{(a)\,$\Rightarrow$\,(c).} Suppose that $f\circ\f\in D^p_\mu$ for each $f$ in $\cb$. Choose two functions $f$, $g \in\cb$ and the constant $C>0$ as in Lemma~\ref{lem-ur}. Evaluate them at $\f(z)$ to get that
$$
 \frac{C}{(1-|\f(z)|^2)^p} \le |f^\prime(\f(z))|^p + |g^\prime (\f(z))|^p\,, \quad z\in\D \,.
$$
This yields
\begin{eqnarray*}
 C \int_\D \frac{|\f^\prime|^p}{(1-|\f|^2)^p}\,d\mu &\le & \int_\D |f^\prime \circ \f|^p |\f^\prime|^p\,d\mu + \int_\D |g^\prime \circ \f|^p |\f^\prime|^p\,d\mu
\\
 &\le & \|f\circ\f\|_{D^p}^p + \|g\circ\f\|_{D^p}^p < \infty\,.
\end{eqnarray*}
This proves (c).
\par\medskip
\fbox{(c)\,$\Rightarrow$\,(b).} Suppose that $\int_\D |\f^\#|^p\,d\mu <\infty$. Let $f$ be an arbitrary function in $\cb$. Then
$$
 |f^\prime(\f(z))| (1-|\f(z)|^2) \le \|f\|_\cb
$$
for every $z\in\D$. This readily implies that
$$
 \int_\D |f^\prime \circ \f|^p |\f^\prime|^p\,d\mu \le \int_\D |\f^\#|^p\,d\mu \,\cdot\, \|f\|_\cb^p < \infty\,.
$$
Also, from the growth estimate \eqref{bl-growth} we obtain
$$
 |f(\f(0))|\le \(1+\frac12 \log\frac{1+|\f(0)|}{1-|\f(0)|}\)\, \|f\|_\cb\,.
$$
By summing up the last two inequalities, it follows that $C_\f$ is bounded as an operator from $\cb$ into $M(D^p_\mu)$.
\par\medskip
\fbox{(b)\,$\Rightarrow$\,(a)} is trivial.
\end{proof}
\par\medskip
\begin{prop}
 \label{prop-bded-inv}
The following statements are equivalent:
\begin{itemize}
 \item[(a)]
$C_\f\colon \cb\to M(D^p_\mu)$;
 \item[(b)]
$C_\f$ is a bounded operator from $\cb$ into $M(D^p_\mu)$;
 \item[(c)]
$\sup_{\s\in Aut(\D)} \int_\D |(\f\circ\s)^\#|^p
	\,d\mu<\infty$.
\end{itemize}
\end{prop}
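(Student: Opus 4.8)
The plan is to run exactly the same cyclic argument (a)$\Rightarrow$(c)$\Rightarrow$(b)$\Rightarrow$(a) that was used in Proposition~\ref{prop-bded-basic}, the only new feature being an extra inner composition with an automorphism $\s$ inside every integral. The computation that drives everything is a chain-rule factorization: for any $f\in\cb$ and any $\s\in Aut(\D)$, one writes
$$
 |(f\circ\f\circ\s)'(z)| = \Bigl[(1-|\f(\s(z))|^2)\,|f'(\f(\s(z)))|\Bigr]\cdot\frac{|\f'(\s(z))|\,|\s'(z)|}{1-|\f(\s(z))|^2}\,,
$$
where the second factor is precisely $(\f\circ\s)^\#(z)$ and the bracketed first factor is at most $\|f\|_\cb$ by definition of the Bloch norm. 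This single identity turns the $p$-th power of the seminorm of $f\circ\f\circ\s$ into an integral of $|(\f\circ\s)^\#|^p$, and crucially it does so uniformly in $\s$, which is what the supremum in the $M(D^p_\mu)$ norm demands.

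For (c)$\Rightarrow$(b): using the factorization and the bound on the bracket, I would estimate, for each $\s$,
$$
 \int_\D |(f\circ\f\circ\s)'|^p\,d\mu \le \|f\|_\cb^p \int_\D |(\f\circ\s)^\#|^p\,d\mu \le \|f\|_\cb^p\,\sup_{\tau\in Aut(\D)}\int_\D |(\f\circ\tau)^\#|^p\,d\mu\,,
$$
then take the supremum over $\s$ and add the point-evaluation term $|f(\f(0))|^p$, which is controlled by the growth estimate \eqref{bl-growth} exactly as in the previous proof. This gives $\|f\circ\f\|_{M(D^p_\mu)}^p \le C'\,\|f\|_\cb^p$ for a constant $C'$ independent of $f$, i.e.\ $C_\f$ is bounded into $M(D^p_\mu)$.

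For (a)$\Rightarrow$(c): I would invoke Lemma~\ref{lem-ur} to fix $f,g\in\cb$ and $C>0$ with $|f'|^p+|g'|^p\ge C/(1-|\cdot|^2)^p$, evaluate that inequality at $w=\f(\s(z))$, multiply through by $|\f'(\s(z))|^p|\s'(z)|^p$, and integrate against $\mu$. The left-hand side becomes $C\int_\D |(\f\circ\s)^\#|^p\,d\mu$, while the right-hand side is $\int_\D|(f\circ\f\circ\s)'|^p\,d\mu + \int_\D |(g\circ\f\circ\s)'|^p\,d\mu$. Since (a) guarantees that $f\circ\f$ and $g\circ\f$ both lie in $M(D^p_\mu)$, each of these integrals is dominated by the corresponding $M(D^p_\mu)$-seminorm independently of $\s$; taking the supremum over $\s$ then yields (c). Finally, (b)$\Rightarrow$(a) is immediate.

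The step needing the most care is (a)$\Rightarrow$(c). The subtle point is that the finiteness coming from $f\circ\f\in M(D^p_\mu)$ is exactly a bound uniform in $\s$ (this is the whole content of the $M(D^p_\mu)$-seminorm), so one must be sure to take the supremum over $\s$ \emph{after} integrating, not before, and to observe that the two test functions $f,g$ furnished by Lemma~\ref{lem-ur} serve simultaneously for every automorphism $\s$. Once this uniformity is correctly tracked, the remaining manipulations are the routine chain-rule and change-of-variable bookkeeping already displayed in Proposition~\ref{prop-bded-basic}.
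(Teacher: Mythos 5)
Your proposal is correct and is precisely what the paper intends: the paper's own proof of Proposition~\ref{prop-bded-inv} consists of the single remark that it ``can be worked out along the same lines as that of Proposition~\ref{prop-bded-basic}, with the necessary modifications,'' and your write-up supplies exactly those modifications --- the chain-rule factorization with the inner automorphism $\s$, the uniform-in-$\s$ use of the Bloch bound and of Lemma~\ref{lem-ur} evaluated at $\f(\s(z))$, and the correct order of integrating before taking the supremum over $\s$. Nothing is missing or superfluous.
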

\begin{proof}
The proof can be worked out along the same lines as that of Proposition~\ref{prop-bded-basic}, with the necessary modifications.
\end{proof}
\par\medskip
Since we are working in a fairly general context, it is important to make sure that we are not dealing with trivial situations by displaying  examples that work in a large number of cases. Here is a very simple example showing that very simple symbols may or may not yield bounded composition operators from $\cb$ to our spaces.
\par\medskip
\begin{ex}
 \label{ex-ident}
Let $1\le p<\infty$, $d\mu(z)=(1-|z|^2)^\a\,dA(z)$, and let $\f(z)\equiv z$. Then the following statements are equivalent:
\begin{itemize}
 \item[(a)]
$C_\f$ is bounded as an operator from $\cb$ into $D^p_\mu$.
 \item[(b)]
$C_\f$ is bounded as an operator from $\cb$ into $M(D^p_\mu)$.
 \item[(c)]
$p-\a<1$.
\end{itemize}
The case of $M(D^p_\mu)$ is slightly more involved, but still easy to check, in view of the identity \eqref{ident-aut}:
$$
 \int_\D |(\f\circ\s)^\#|^p\,d\mu = \int_\D |\s^\#(z)|^p\,d\mu(z) =
 \int_\D \frac{d\mu(z)}{(1-|z|^2)^p} = \int_\D \frac{dA(z)}{(1-|z|^2)^{p-\a}}\,.
$$
Trivial integration in polar coordinates shows that the last integral converges only for the range indicated in (c). (Note that this is really a statement about the containments $\cb\subset M(D^p_\mu)$ but is at the same time an example for composition operators.)
\end{ex}
\par\medskip
The case of bounded operators from $\cb$ into the little M\"obius invariant subspaces $M_0(D^p_\mu)$ will be considered in the last section together with the compactness question. This is done because the two turn out to be equivalent and the proof requires other results to be obtained first.
\subsection{Some remarks on the case $p=2$}
 \label{subsec-p=2}
The reader has probably noticed some differences in the formulation of our results and those by other authors pertaining to the cases like $BMOA$ or $Q_\a$. The reason for this is very simple: these spaces are obtained in the special case $p=2$ when some of our results above can be rewritten in a different language.
\par
To this end, denote by $N_\f$ the \textit{counting function\/} of $\f$:
$$
 N_\f(w)= |\{z\in\D\colon \f(z)=w\}|\,,
$$
understanding $0$, $1$, $2$,\ldots,$\infty$ as its possible values.
Let us also agree to write $A_h$ for the hyperbolic area of a subset of the disk:
$$
 A_h(S)=\int_S \frac{d A(z)}{(1-|z|^2)^2} \,.
$$
\begin{lem}
 \label{lem-hyp-equiv}
For arbitrary positive measure $\mu$, we have
\begin{equation}
 \frac{|\f^\prime|^2}{(1-|\f|^2)^2} = \De \log \frac1{1-|\f|^2} \,.
 \label{hyp-equiv-1}
\end{equation}
When $d\mu=dA$, we also have
\begin{equation}
 \int_\D \frac{|\f^\prime|^2}{(1-|\f|^2)^2} d A = \int_\D N_\f\,d A_h\,.
 \label{hyp-equiv-2}
\end{equation}
\end{lem}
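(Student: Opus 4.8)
The plan is to prove the two identities separately, as they have quite different characters. The first identity \eqref{hyp-equiv-1} is a pointwise computation, while the second \eqref{hyp-equiv-2} is a change-of-variables / counting argument.

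For \eqref{hyp-equiv-1}, I would start from the well-known fact that for any analytic $g$ one has $\De |g|^2 = 4|g'|^2$, and more generally a composition formula for the Laplacian under analytic maps. The cleanest route is to first establish the identity in the special case where $\f$ is the identity, i.e. to verify directly that
$$
 \De \log\frac{1}{1-|z|^2} = \frac{1}{(1-|z|^2)^2}\,.
$$
This is a routine but instructive computation: writing $u(z)=-\log(1-|z|^2)=-\log(1-z\overline z)$ and using $\De = 4\,\pd\db$ (in the complex form of the Laplacian), one differentiates $\db u = \frac{z}{1-|z|^2}$ and then $\pd$ of that, obtaining $\pd\db u = 1/(1-|z|^2)^2$, so $\De u = 4/(1-|z|^2)^2$ up to the normalization of $\De$. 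One must fix the normalization convention for $\De$ consistently with the factor in \eqref{hyp-equiv-1}. Then I would invoke the chain rule for the Laplacian under an analytic change of variable: since $\f$ is analytic, $\De(u\circ\f) = (\De u)\circ\f \cdot |\f'|^2$. Applying this with the $u$ above gives exactly the left-hand side $|\f'|^2/(1-|\f|^2)^2$, which is \eqref{hyp-equiv-1}.

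For \eqref{hyp-equiv-2}, the natural tool is the change-of-variables formula for multiplicities (the general area/coarea principle, or equivalently the non-univalent change of variables for holomorphic maps). Writing $w=\f(z)$ and recalling $dA(w)=|\f'(z)|^2\,dA(z)$ on any region where $\f$ is injective, the multiplicity-weighted change of variables gives, for a nonnegative measurable $F$,
$$
 \int_\D F(\f(z))\,|\f'(z)|^2\,dA(z) = \int_\D F(w)\,N_\f(w)\,dA(w)\,.
$$
Taking $F(w)=1/(1-|w|^2)^2$ turns the left-hand side into $\int_\D |\f'|^2/(1-|\f|^2)^2\,dA$ and the right-hand side into $\int_\D N_\f(w)\,dA(w)/(1-|w|^2)^2 = \int_\D N_\f\,dA_h$, which is precisely \eqref{hyp-equiv-2}.

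The main obstacle is the rigorous justification of the multiplicity change-of-variables formula in \eqref{hyp-equiv-2}, since $\f$ need not be univalent and its critical set (where $\f'=0$) must be handled. The standard remedy is that the critical points of a nonconstant analytic $\f$ form a discrete set of measure zero, so one decomposes $\D$ (minus this negligible set) into countably many regions on each of which $\f$ is injective, applies the ordinary change of variables on each, and sums using Tonelli's theorem (all integrands are nonnegative, so convergence issues do not arise and both sides may equal $+\infty$ simultaneously). I expect the pointwise identity \eqref{hyp-equiv-1} to be essentially mechanical once the normalization of $\De$ is pinned down, whereas the counting-function identity, though standard, is where the care lies.
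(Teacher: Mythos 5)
Your proposal is correct and takes essentially the same route as the paper: the paper derives \eqref{hyp-equiv-1} from the same chain rule $\De(u\circ\f)=(\De u\circ\f)\,|\f^\prime|^2$ applied to $u=\log\frac{1}{1-|z|^2}$ (with the Laplacian normalized as $\pd\db$, exactly the convention issue you flag), and obtains \eqref{hyp-equiv-2} by invoking the non-univalent change-of-variables formula, which it cites from Shapiro and Cowen--MacCluer. The only difference is that you supply the details (the explicit computation of $\pd\db u$ and the decomposition-plus-Tonelli proof of the counting-function formula) that the paper delegates to references.
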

\par\smallskip
Formula \eqref{hyp-equiv-1} is a simple consequence of the identity $\De(u\circ\f)=(\De u\circ\f)|\f^\prime|^2$ while \eqref{hyp-equiv-2} follows from the well-known formula for non-univalent change of variable (see \cite[Section~10.3]{Sh} or \cite[Theorem~2.32]{CMC}).
\par\smallskip
Taking into account the equivalent forms of writing $|\f^\#|$ from Lemma~\ref{lem-hyp-equiv}, it becomes obvious how the condition (c) in Proposition~\ref{prop-bded-basic} and Proposition~\ref{prop-bded-inv} can be rewritten. For example, in two special cases we could state our Proposition~\ref{prop-bded-basic} or Proposition~\ref{prop-bded-inv} as follows:
\begin{itemize}
 \item
\textit{For arbitrary $\mu$, the composition operator $C_\f$ is bounded from $\cb$ into $M(D^2_\mu)$ if and only if}
$$
 \sup_{\s\in Aut(\D)} \int_\D \De\log\frac1{1-|\f\circ\s|^2}\,d\mu <\infty \,.
$$
\item
\textit{$C_\f$ is bounded from $\cb$ into $D^2_A$ ($A$ being the area measure) if and only if
$$
 \sup_{\s\in Aut(\D)}  \int_\D N_{\f\circ\s}\,d A_h = \int_\D N_{\f}\,d A_h < \infty\,,
$$
in view of conformal invariance.}
\end{itemize}
\par\medskip
\section{Compact and weakly compact composition operators}
 \label{sec-cpt-ops}
\par
Recall that a bounded linear operator between two Banach spaces is
said to be \emph{compact} if it takes bounded sets into sets whose
closure is compact; equivalently, if for every bounded sequence in
the space the sequence of images has a convergent subsequence in the
norm topology. A bounded operator is \emph{weakly compact} if it
takes bounded sets into sets whose closure is weakly compact;
equivalently, if for every bounded sequence in the space the sequence
of images has a subsequence that converges in the weak topology.
Compactness obviously implies weak compactness.
\par
We will now show that every composition operator from the Bloch space
$\cb$ into any of our spaces $M(D^p_\mu)$ is compact if and only if it
is weakly compact and will also give a characterization of this
property in terms of the symbol $\f$ which unifies all previously
obtained results for concrete spaces. In the special case of composition operators from $\cb$ to $Q_\a$, the equivalence of (a) and (c) in Theorem~\ref{thm-cpt-inv} below has been proved before by Smith and Zhao \cite{SZ}; see also \cite{X1} or \cite{Sm}. However, weak compactness was not considered in these works.
\par
The main novelty of our approach consists of the use of certain
techniques usually employed by the experts in Banach space theory,
the main one being a version of the Banach-Saks theorem. We formulate below the statement needed as a lemma but remark that it proof relies on some rather non-trivial results. It should be observed that the lemma is no longer true (even for composition operators) if we only assume boundedness of the operator.
\begin{lem}
 \label{lem-b-s}
Suppose that $T$ is a weakly compact operator from $\cb$ into an arbitrary Banach space $Y$. Then every bounded sequence $(f_{n})_n$ in $\cb$ has a subsequence $(f_{n_{k}})_k$ such that the arithmetic means of the images $Tf_{n_k}$ converge to some element in the norm of \/$Y$.
\end{lem}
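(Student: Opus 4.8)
The plan is to convert weak compactness of $T$ into norm convergence of Cesàro means in two stages: first extract a weakly convergent subsequence of the images, and then upgrade that weak convergence to convergence in norm of the \emph{arithmetic} means. Since $(f_n)_n$ is bounded and $T$ is weakly compact, the set $\{Tf_n : n\in\N\}$ is relatively weakly compact in $Y$, so by the Eberlein--\v Smulian theorem there is a subsequence with $Tf_{n_k}\to y$ weakly for some $y\in Y$. At this point Mazur's theorem already gives \emph{convex} combinations of the $Tf_{n_k}$ converging to $y$ in norm; the whole difficulty is that the statement requires the specific \emph{equal-weight} means $\tfrac1N\sum_{k=1}^N Tf_{n_k}$, and producing these cannot be had for free. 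This is exactly the Banach--Saks-type content of the lemma.

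To organize the argument I would apply Rosenthal's $\ell^1$ theorem to the bounded sequence $(f_n)_n$ in $\cb$: after passing to a subsequence, either (i) $(f_n)_n$ is weakly Cauchy, or (ii) $(f_n)_n$ is equivalent to the unit vector basis of $\ell^1$. In case (i) I would use that $\cb$ has the Dunford--Pettis property; recall that $\cb$ is isomorphic to $\ell^\infty=C(\beta\N)$, and $C(K)$-spaces have this property, so the weakly compact operator $T$ is automatically completely continuous. Since a weakly Cauchy sequence satisfies $f_{n_k}-f_{n_j}\to 0$ weakly as $j,k\to\infty$, complete continuity sends these differences to norm-null vectors, whence $(Tf_{n_k})_k$ is norm-Cauchy; it therefore converges in norm, and \emph{a fortiori} so do its arithmetic means.

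The real obstacle is case (ii), where $(f_n)_n$ spans a copy of $\ell^1$: now $(Tf_{n_k})_k$ is merely weakly convergent, and in an arbitrary target $Y$ a weakly null sequence need not have \emph{any} subsequence whose Cesàro means converge in norm (the Schreier space is the classical witness, so the conclusion is genuinely false for operators out of $\ell^1$ itself). This is precisely where the announced non-trivial input must be invoked, and it must draw on the \emph{domain} rather than on weak compactness alone: the structure of $\cb\cong\ell^\infty=C(\beta\N)$ forces weakly compact operators out of it to be so rigid as to be Banach--Saks operators, so that a further subsequence of $(Tf_{n_k})_k$ does have norm-convergent arithmetic means. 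Combining the two alternatives yields the subsequence demanded by the lemma. I expect case (ii) --- the passage from weak convergence to convergence of equal-weight means for the $\ell^1$-behaviour --- to be the genuinely hard, non-elementary step, in accordance with the paper's warning that the proof rests on some rather non-trivial results; note finally that mere boundedness of $T$ would not suffice here, since then $\{Tf_n\}$ need not even be relatively weakly compact, which is consistent with the remark preceding the statement.
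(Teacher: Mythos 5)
Your surrounding analysis is largely correct and correctly locates the crux: you identify $\cb\cong l^{\infty}=C(\beta\N)$, you correctly observe that Mazur's theorem only yields unspecified convex combinations rather than equal-weight means, and your Schreier-space example showing that the conclusion genuinely fails for weakly compact operators out of $l^1$ is a valid (and nice) justification of the paper's remark that boundedness alone, or weak compactness without using the domain, cannot suffice. Your Case (i) is also sound: $C(K)$-spaces have the Dunford--Pettis property, so the weakly compact operator $T$ is completely continuous and carries a weakly Cauchy subsequence to a norm-Cauchy sequence.

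The problem is Case (ii), which you yourself flag as the only hard case: there is no argument there. The sentence asserting that the structure of $C(\beta\N)$ ``forces weakly compact operators out of it to be so rigid as to be Banach--Saks operators'' is precisely the statement the lemma asks you to prove (and, by your own Rosenthal reduction, its entire remaining content), asserted rather than established or attributed; as written, the proposal is circular at its core. What closes this gap in the paper is a citation, not an elementary argument: the Banach--Saks-type theorem of Diestel--Seifert and Niculescu (see \cite[p.~320]{DJT}), which states that \emph{every} weakly compact operator from a space of continuous functions on a compact Hausdorff space into an arbitrary Banach space has the Banach--Saks property. The paper reaches the $C(K)$-representation exactly along the lines you sketch --- Lusky's isomorphism $\cb\cong l^{\infty}$, then the Gelfand--Naimark theorem applied to the commutative $C^{\ast}$-algebra $l^{\infty}$ --- and then applies that theorem directly to the whole bounded sequence $(f_n)_n$. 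Note that once this theorem is invoked, your Rosenthal $\ell^1$-dichotomy and the Dunford--Pettis argument become superfluous; conversely, without invoking it (or Jarchow's more general $C^{\ast}$-algebra version, also cited in the paper), your Case (ii) stands unproved, and that case is the whole difficulty.
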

\begin{proof}
Recall that the Bloch space is isomorphic to the space of all bounded complex sequences $l^{\infty}$; see Lusky's paper \cite{Lu}. On the other hand, $l^{\infty}$ is a unital commutative $C^{\ast}$-algebra endowed with the usual operations of coordinatewise multiplication and conjugation. The Gelfand-Naimark theorem (see \cite[Thm.~11.18]{Ru} where the author uses the term $B^\ast$-algebra instead) now implies that $l^{\infty}$ is isomorphic to a space of continuous functions on its maximal ideals (which is a compact Hausdorff space by \cite[Thm.~11.9]{Ru}). Thus, we are allowed to apply a Banach-Saks type theorem proved in 1979 by Diestel - Seifert and Niculescu (see \cite[p.~320]{DJT}) which establishes that any weakly compact linear operator from a space of continuous functions on a compact Hausdorff space into an arbitrary Banach space has the Banach-Saks property.
\par
Alternatively, we could have deduced the statement from a more general result of Jarchow referring directly to the $C^{\ast}$-algebras (see also \cite[p.~320]{DJT}).
\end{proof}
\par\medskip
Note that the measure $\mu$ in the theorem below is not required to be of any special form. In particular, it need not be finite.
\par
For the sake of brevity, throughout the rest of the paper we will write simply  $\{|\f\circ\s|>r\}$ to denote the set $\{z\in\D\colon |(\f\circ\s) (z)|>r\}$.
\par\medskip
\begin{thm}
 \label{thm-cpt-inv}
Let $1\leq p<\infty$. Suppose that $C_\f$ is a bounded operator from
$\cb$ to $M(D^p_\mu)$. Then the following statements are equivalent:
\begin{itemize}
 \item[(a)]
$C_\f$ is a compact operator from $\cb$ to $M(D^p_\mu)$.
 \item[(b)]
$C_\f$ is a weakly compact operator from $\cb$ to $M(D^p_\mu)$.
 \item[(c)]
$\lim_{r\to 1} \sup_{\s\in Aut(\D)} \int_{\{|\f\circ\s|>r\}} |(\f\circ\s)^\#|^p\,d\mu=0$.
\end{itemize}
\end{thm}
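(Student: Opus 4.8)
The plan is to close the cycle using the trivial implication (a)$\Rightarrow$(b), together with the two substantial implications (c)$\Rightarrow$(a), which is purely function-theoretic and follows the pattern of Xiao's treatment, and (b)$\Rightarrow$(c), which is where the Banach--Saks machinery of Lemma~\ref{lem-b-s} enters. Throughout I would use the standard sequential reformulation of compactness for this class of operators: since the point evaluations are uniformly bounded on compact subsets, every norm-bounded sequence in $\cb$ is a normal family, so $C_\f$ is compact if and only if $\|C_\f f_n\|_{M(D^p_\mu)}\to 0$ whenever $\|f_n\|_\cb\le 1$ and $f_n\to 0$ uniformly on compact subsets of $\D$ (the general case reduces to this by extracting a locally uniformly convergent subsequence with limit $f\in\cb$ and applying the statement to $f_n-f$).

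For (c)$\Rightarrow$(a) I would take such a sequence $(f_n)$ and estimate
$$
 \|C_\f f_n\|_{M(D^p_\mu)}^p = |f_n(\f(0))|^p + \sup_{\s\in Aut(\D)} \int_\D |f_n'\circ(\f\circ\s)|^p\,|(\f\circ\s)'|^p\,d\mu .
$$
The first term tends to $0$ since $\f(0)$ is fixed. For the supremum I would split each integral over $\{|\f\circ\s|>r\}$ and its complement. On the tail I use the Bloch bound $|f_n'(w)|(1-|w|^2)\le 1$ to dominate the integrand by $|(\f\circ\s)^\#|^p$, so that hypothesis (c) makes this piece smaller than any prescribed $\e$ uniformly in $\s$ and $n$ once $r$ is close enough to $1$. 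On the remaining set $\{|\f\circ\s|\le r\}$ the images lie in the compact disk $\overline{\D_r}$, where $f_n'\to 0$ uniformly by the Cauchy estimates; since $(1-|\f\circ\s|^2)^p\le 1$ yields $|(\f\circ\s)'|^p\le |(\f\circ\s)^\#|^p$, this piece is at most $(\sup_{|w|\le r}|f_n'|)^p$ times $\sup_\s\int_\D|(\f\circ\s)^\#|^p\,d\mu$, a quantity finite by the assumed boundedness (Proposition~\ref{prop-bded-inv}) and hence tending to $0$ uniformly in $\s$. Combining the two estimates gives $\|C_\f f_n\|_{M(D^p_\mu)}\to 0$.

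For (b)$\Rightarrow$(c) I would argue by contraposition. If (c) fails, then the supremum in (c) stays bounded below by some $L>0$ for every radius, so I can select automorphisms $\s_n$ and radii $r_n\uparrow 1$ with $\int_{E_n}|(\f\circ\s_n)^\#|^p\,d\mu\ge\delta>0$, where $E_n=\{|\f\circ\s_n|>r_n\}$. Applying Lemma~\ref{lem-ur} pointwise and passing to a subsequence, I may replace the hyperbolic derivative by a single fixed Bloch function $h$, so that $\int_{E_n}|h'\circ(\f\circ\s_n)|^p\,|(\f\circ\s_n)'|^p\,d\mu\ge\delta'>0$ for all $n$; testing the $M(D^p_\mu)$-norm against the automorphism $\s_n$ then shows that the relevant images are \emph{bounded below} with their mass escaping to the boundary along the shrinking annuli. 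The heart of the matter is then to manufacture from this a norm-bounded sequence $(f_n)$ in $\cb$ whose images $(C_\f f_n)$ admit \emph{no} subsequence with norm-convergent arithmetic means, directly contradicting the conclusion of Lemma~\ref{lem-b-s}. Here I would use lacunary Bloch functions of Ramey--Ullrich type, whose invariant derivatives can be made comparable to $(1-|w|^2)^{-1}$ on prescribed annular shells while the entire family, together with all of its sign combinations, stays Bloch-bounded, and I would arrange the shells cumulatively so that the images behave like the summing basis of $c_0$ rather than a disjointly supported (hence Ces\`aro-null) sequence.

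The main obstacle is precisely this last construction. The delicate points are twofold. First, a diagonal and pigeonhole extraction is needed to localize the mass $\int_{E_n}|(\f\circ\s_n)^\#|^p\,d\mu$ into well-separated shells, which requires exploiting that the supremum in (c) stays $\ge L$ at \emph{every} radius and not merely along one sequence. Second, the test functions must be chosen cumulatively rather than disjointly, because the feature that makes weak compactness fail here is exactly the supremum over $Aut(\D)$ in the norm of $M(D^p_\mu)$: this gives the target norm an $\ell^\infty$-like, non-reflexive character, and it is precisely this feature that the Banach--Saks property of a weakly compact operator on $\cb\cong\ell^\infty$ cannot tolerate. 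Verifying that the constructed images genuinely have no Ces\`aro-convergent subsequence, with the cross terms coming from distinct shells kept under control, is the computational core of the argument.
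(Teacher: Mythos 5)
Your implication (c)\,$\Rightarrow$\,(a) is correct and essentially identical to the paper's argument (normal families, split the integral at $\{|\f\circ\s|\le r_0\}$, Bloch bound on the tail, uniform convergence plus boundedness of $C_\f$ on the compact part), and the reduction of (b)\,$\Rightarrow$\,(c) via Lemma~\ref{lem-ur} and a pigeonhole to a single Bloch function $h$ with $\int_{E_n}|(h\circ\f\circ\s_n)'|^p\,d\mu\ge\delta'$ is also sound. The gap is exactly where you place it yourself: the claim that one can build a norm-bounded sequence in $\cb$, from ``lacunary Bloch functions of Ramey--Ullrich type arranged cumulatively,'' whose images under $C_\f$ admit no subsequence with norm-convergent Ces\`aro means, is never constructed or verified. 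This is not a routine verification you may defer: the mass $\int_{E_n}|(\f\circ\s_n)^\#|^p\,d\mu$ need not concentrate on any shell (it can be spread thinly over the whole annulus $\{r_n<|w|<1\}$), so the proposed ``localization into well-separated shells'' is not available in general, and without it the cross-term control that your summing-basis heuristic requires has no starting point. As written, the proof of the hardest implication reduces to an acknowledged unproven claim.

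It is worth seeing how the paper sidesteps this construction entirely, because the route is structurally different. The paper does not negate (c) directly; it first proves a weaker condition involving only $|(\f\circ\s)'|^p$ (no hyperbolic derivative): if that fails, with witnesses $\s_n$, $r_n$, $\delta$, one applies Lemma~\ref{lem-b-s} to the \emph{monomials} $z^{m_n}$, where the exponents $m_n$ and radii $r_n$ are chosen so that $m_n r_n^{m_n-1}>\tfrac{m_{n-1}(m_{n-1}+1)}{2}+n$. The point is that in any Ces\`aro mean $\frac1N\sum_k m_{n_k}x^{m_{n_k}-1}$ the top term automatically dominates the crude bound $\sum_{j\le m_{n_{N-1}}}j$ on all earlier terms whenever $x>r_{n_N}$, so the mean is $\ge 1$ pointwise on the tail set; since the means must converge to $0$ in norm, this forces the tail integrals of $|(\f\circ\s_{n_N})'|^p$ to be small, a contradiction. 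No shell localization and no cross-term estimates are needed --- monotonicity of $x^{m-1}$ does all the work. The hyperbolic derivative is then recovered \emph{afterwards} in two further steps: a second application of Lemma~\ref{lem-b-s} to the dilations $f_r$ of a fixed $f\in\cb$ (using the already-established weak tail condition and the $H^\infty$ bound on $\frac1N\sum f_{r_k}'$) gives the vanishing tail condition for every $(f\circ\f\circ\s)'$, and only then is Lemma~\ref{lem-ur} invoked to convert this into condition (c). If you want to complete your own single-shot construction, you would in effect be reproving something close to the Bessaga--Pe\l czy\'nski-type argument of \cite{LNST}; the paper's two-stage factorization through the ordinary derivative is what makes the contradiction elementary.
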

\begin{proof}
We proceed to prove the statement by proving the implications (c)\,$\Rightarrow$ (a)\,$\Rightarrow$\,(b)\, $\Rightarrow$\,(c).
\par\medskip
\fbox{(c)\,$\Rightarrow$\,(a).} Suppose (c) holds. It is clear that if $(f_n)_n$ is a bounded sequence in the Bloch space that converges to zero uniformly on compact sets, then $\lim_{n\to\infty} f_n(\f (0))$ $=0$. Thus, let us concentrate on the second term that appears in the norm. Fix an arbitrary $\e>0$. Then there exists $r_0\in (0,1)$ such that
\begin{equation}
 \sup_{\s\in Aut(\D)} \int_{ \{|\f\circ\s|>r_0\} } |(\f\circ\s)^\#|^p \,d\mu < \frac{\e}{2^{p+1}}\,.
 \label{est-hyp}
\end{equation}
Let $(f_n)_n$ be an arbitrary sequence in $\cb$ with $\|f_n\|_\cb\le
1$ for all $n$. By a normal families argument, there exists a
subsequence which we denote by $(g_n)_n$ which converges uniformly on
compact sets to an analytic function $g$. From
$$
 |g_n^\prime (z)| \le \frac{\|g_n\|_\cb}{1-|z|^2} \le \frac{1}{1-|z|^2}\,, \quad z\in\D\,,
$$
it readily follows that $g$ enjoys the same estimate, hence $g\in\cb$ and $\|g\|_\cb\le 1$. Moreover, we also have that $|g_n^\prime (w) - g^\prime (w)| \le \frac{2}{1-|w|^2}$ for all $w$ in $\D$, hence
\begin{equation}
 |(g_n - g)^\prime \circ (\f\circ\s)|^p |(\f\circ\s)^\prime|^p \le 2^p |(\f\circ\s)^\#|^p
 \label{est-conv}
\end{equation}
holds throughout $\D$. In order to show that $C_\f (g_n)\to C_\f (g)$ in the $M(D^p_\mu)$ norm, we need to show that the integrals
$$
 \int_\D |(g_n - g)^\prime \circ (\f\circ\s)|^p |(\f\circ\s)^\prime|^p\,d \mu
$$
are uniformly small independently of $\s$ as $n\to\infty$. For this purpose, it is convenient to split the above integral into two (omitting the integrands below):
\begin{equation}
 \int_\D = \int_{\{|\f\circ\s|\le r_0\}} + \int_{\{|\f\circ\s|>r_0\}} \,.
 \label{int-split}
\end{equation}
By assumption, $C_\f$ is bounded from $\cb$ to $M(D^p_\mu)$ so in view of Proposition~\ref{prop-bded-inv}, we have
$$
 \sup_{\s\in Aut(\D)} \int_\D |(\f\circ\s)^\#|^p\,d\mu\le M
$$
for some fixed positive constant $M$. Given $\e>0$, by virtue of the uniform convergence: $(g_n - g)^\prime\to 0$ on the compact set $\{z\,\colon\,|z|\le r_0\}$, for large enough $n$ we have
\begin{multline*}
 \int_{\{|\f\circ\s|\le r_0\}} |(g_n - g)^\prime \circ (\f\circ\s)|^p |(\f\circ\s)^\prime|^p\,d \mu
\\
 \le \int_{\{|\f\circ\s|\le r_0\}} |(g_n - g)^\prime \circ (\f\circ\s)|^p \frac{|(\f\circ\s)^\prime|^p}{(1-|\f\circ\s|^2)^p}\,d \mu
\\
 < \frac{\e}{2 M} \int_{\{|\f\circ\s|\le r_0\}}  \frac{|(\f\circ\s)^\prime|^p}{(1-|\f\circ\s|^2)^p}\,d \mu
 \le \frac{\e}{2}\,.
\end{multline*}
Thus, for $n$ sufficiently large, the first integral in \eqref{int-split} can be made smaller than $<\e/2$.
\par
The second integral in \eqref{int-split} is smaller than $\e/2$ in view of the inequalities \eqref{est-hyp} and \eqref{est-conv}. This implies that
$$
 \|C_\f (g_n)- C_\f (g)\|_{M(D^p_\mu)} <\e
$$
for $n$ large enough, as asserted.
\par\medskip
\fbox{(a)\,$\Rightarrow$\,(b)} is obvious.
\par\medskip
\fbox{(b)\,$\Rightarrow$\,(c)} is the most intricate part of the proof. We follow the steps indicated below.
\par\smallskip
\underline{\it Step 1\/}: We first show that the weak compactness assumption on $C_\f$ implies
\begin{equation}
 \lim_{r\to 1} \sup_{\s\in Aut(\D)} \int_{ \{|\f\circ\s|>r\} } |(\f\circ\s)^\prime|^p\,d\mu = 0\,.
 \label{e1}
\end{equation}
This condition alone is apparently much weaker than (c). However, we
will eventually show that, together with the weak compactness of $C_\f$, it actually implies the desired condition (c).
\par
Thus, suppose that $C_\f$ is weakly compact from $\cb$ into $M(D^p_\mu)$ but \eqref{e1} does not hold. Then we can find a positive number $\d$, an increasing sequence $(\r_j)_j$ of numbers in $(0,1)$ such that $\lim_{n\to\infty} \r_j=1$, and a sequence of disk automorphisms $(\tau_j)_j$ such that
\begin{equation}
 \int_{\{ |\f\circ\tau_j|>\r_j\} } |(\f\circ\tau_j)^\prime|^p\,d\mu  \ge\d\,.
 \label{delta}
\end{equation}
For a positive integer $k,$ let us agree to write
$$
C_{k}=\frac{k(k+1)}2 \,.
$$
Next, choose recursively a subsequence $(m_n)_n$ of the integers in such a way that
$$
 m_0=1\,, \quad m_n > C_{m_{n-1}} + n \,.
$$
Once the sequence $(m_n)_n$ has been fixed, let us choose the subsequence $(r_n)$ of the sequence $(\r_j)_j$ so that
$$
 m_n r_n^{m_n-1} > C_{m_{n-1}} + n \,, \quad n\ge 1\,.
$$
This is possible since $\lim_{r\to 1} m_n r^{m_n-1}=m_n$. Note that then
\begin{equation}
 m_n r_n^{m_n-1} > C_{m_{n-1}} + n \ge m_{n-1}+n > m_{n-1}\,.
 \label{sequence}
\end{equation}
Also, let us choose the subsequence $(\s_n)_n$ of $(\tau_j)_j$ with the same indices as those of $(r_n)_n$ with respect to $(\r_j)_j$.
\par
By applying Lemma~\ref{lem-b-s} to our weakly compact operator $C_{\f}\,\colon\,\mathcal{B}\to M(D^p_\mu)$ and observing that the sequence $(z^{m_{n}})_{n}$ is bounded in the Bloch space, we conclude that there exists a subsequence $(m_{n_k})_k$ of $(m_n)_n$ for which the arithmetic means
$$
 \frac{1}{N} \sum_{k=1}^N \f^{m_{n_{k}}}
$$
converge in the norm of $M(D^p_\mu)$. They actually must tend to zero since they converge to zero uniformly on compact sets. Hence,
$$
 \left\| \frac{1}{N} \sum_{k=1}^N \f^{m_{n_{k}}} \right\|^p_{M(D^p_\mu)} \ge \sup_{\s}\int_{\D}\left\vert  \frac{1}{N} \sum_{k=1}^{N} m_{n_{k}} (\f^{m_{n_{k}}-1}\circ\s)\right\vert ^p |(\f\circ\s)^\prime|^p \,d\mu \,,
$$
hence
$$
 \lim_{N\to\infty} \sup_{\s}\int_{\D}\left\vert  \frac{1}{N} \sum_{k=1}^{N} m_{n_{k}} (\f^{m_{n_{k}}-1}\circ\s)\right\vert ^p |(\f\circ\s)^\prime|^p \,d\mu = 0\,.
$$
Let $\e>0$ be fixed. There exists a positive integer $N>1$ such that
$$
 \sup_{\s}\int_{\D}\left\vert  \frac{1}{N} \sum_{k=1}^{N} m_{n_{k}} (\f^{m_{n_{k}}-1}\circ\s)\right\vert ^p |(\f\circ\s)^\prime|^p \,d\mu <\e\,.
$$
(It will suffice to work just with this single fixed value of $N$.) The corresponding values $r_{n_N}$ and $\s_{n_N}$ will satisfy
\begin{equation}
 \int_{\{|(\f\circ\s_{n_N})|>r_{n_N}\}} \left\vert  \frac{1}{N} \sum_{k=1}^{N} m_{n_{k}} (\f^{m_{n_{k}}-1} \circ\s_{n_N})
 \right\vert^p |(\f\circ\s_{n_N})^\prime|^p\,d\mu <\e\,.
 \label{epsil}
\end{equation}
For an arbitrary but fixed $z$ such that $|(\f\circ\s_{n_N})(z)| >r_{n_N}$, let us use the shorthand $x=|(\f\circ\s_{n_N})(z)|$. Then, using the triangle inequality for complex numbers, the obvious inequalities $r_{n_N}<x<1$, the elementary identity for the sum of the first $N-1$ positive integers, and \eqref{sequence}, together with the fact that the sequence $(C_k)_k$ is increasing and the obvious inequalities $m_{n_N-1}\ge m_{n_{N-1}}$ and $n_N\ge N$, it follows that
\begin{eqnarray*}
 \frac{1}{N} \left| \sum_{k=1}^{N} m_{n_{k}} (\f^{m_{n_{k}}-1} \circ\s_{n_N})(z) \right| & \ge & \frac{1}{N} \( m_{n_N} x^{m_{n_N}-1} - \sum_{k=1}^{N-1} m_{n_{k}} x^{m_{n_k}-1} \)
\\
 & \ge & \frac{1}{N} \( m_{n_N} x^{m_{n_N}-1} - \sum_{j=1}^{m_{n_{N-1}}} j x^{j-1} \)
\\
 & \ge & \frac{1}{N} \( m_{n_N} x^{m_{n_N}-1} - \sum_{j=1}^{m_{n_{N-1}}} j \)
\\
 & \ge & \frac{1}{N} \( m_{n_N} x^{m_{n_N}-1} - \frac{m_{n_{N-1}} (m_{n_{N-1}}+1)}{2} \)
\\
 & \ge & \frac{1}{N} \( m_{n_N} r_{n_N}^{m_{n_N}-1} - C_{m_{n_{N-1}}} \)
\\
 & \ge & \frac{1}{N} \( C_{m_{n_N}-1} + n_N - C_{m_{n_{N-1}}} \)
\\
 & \ge & \frac{1}{N} n_N \ge 1\,.
\end{eqnarray*}
Together with \eqref{epsil}, this yields
$$
 \int_{\{|(\f\circ\s_{n_N})|>r_{n_N}\}} |(\f\circ \s_{n_N})^\prime|^p\,d\mu <\e\,.
$$
Since this must hold for an arbitrary choice of $\e$, it contradicts our assumption \eqref{delta}. This completes the proof that \eqref{e1} holds.
\par\smallskip
\underline{\it Step 2\/}: Next, we show that the above condition \eqref{e1}, together with the weak compactness of $C_\f$, implies the following condition:
\begin{equation}
 \lim_{r\to 1} \sup_{\s\in Aut(\D)} \int_{ \{|\f\circ\s|>r\} } |(f\circ\f\circ\s)^\prime|^p\,d\mu=0\,, \quad f\in\cb\,.
 \label{e2}
\end{equation}
For any constant function the above condition is trivially fulfilled so let $f$ be an arbitrary but fixed non-constant function in $\cb$. Pick an increasing sequence $(r_n)$ convergent to $1$. Let us agree to denote by $f_r$ the dilations of $f$ defined in the usual way:
\begin{equation}
 f_r (z) := f(r z)\,, \quad 0<r<1\,.
 \label{dilat}
\end{equation}
In view of the obvious inequality:
$$
 (1-|z|^2) r_n |f^\prime (r_n z)| \le (1- |r_n z|^2) |f^\prime (r_n z)| \le \|f\|_\cb\,,
$$
the sequence $(f_{r_n})$ is a bounded in the Bloch space. Also, it  converges to $f$ uniformly on compact sets. Since the operator $C_{\f} \,\colon\,\mathcal{B}\rightarrow M(D^p_\mu)$ has the Banach-Saks property (in reality, it suffices to use the fact that it is weakly compact), there exists a subsequence of $(r_n)$, denoted in the same way by an abuse of notation, such that
$$
 \lim_{N\to\infty} \left\Vert \frac{1}{N}\sum_{k=1}^{N} (f_{r_{k}}\circ\f) - (f\circ \f) \right\Vert _{M(D^p_\mu)} = 0.
$$
Our axiom on boundedness of the point evaluations on $D^p_\mu$ implies uniform convergence of $\frac{1}{N}\sum_{k=1}^{N} (f_{r_{k}}\circ\f)$ to $f\circ \f$. We are, of course, not interested in the trivial case when the symbol $\f$ is a constant function. Since neither of the functions $f$ and $\f$ is identically constant, the same is true of the function $f\circ\f$. Thus, we may select a further subsequence, denoted again by $(r_n)$ in order not to burden the notation, so that $\frac{1}{N}\sum_{k=1}^{N} (f_{r_{k}}\circ \f)$ is not identically constant, and since $\f$ is not identically constant, this implies
$$
 \left\Vert  \frac{1}{N}\sum\limits_{k=1}^{N}f_{r_{k}}^{\prime} \right\Vert_{H^{\infty}} \neq 0 \,.
$$
Given $\e>0$, there exists a positive integer $N$ such that
$$
 \left\Vert \frac{1}{N} \sum\limits_{k=1}^{N} (f_{r_{k}}\circ\f) - (f\circ\f) \right\Vert _{M(D^p_\mu)}<\frac{\e}{2}.
$$
Moreover, by \eqref{e1} there exists $r_0\in (0,1)$ such that if $r_{0}\leq r<1$ then
$$
 \sup_{\s} \int_{ \{ |\f\circ\s| >r\} }\left\vert  (\f\circ\s)^{\prime}\right\vert\,d\mu<\frac{\e}{2 \left\Vert
 \frac{1}{N}\sum\limits_{k=1}^{N}f_{r_{k}}^{\prime}\right\Vert
 _{H^{\infty}}}\,.
$$
Hence for $r_0 \le r<1$ and for every disk automorphism $\s$ we have
\begin{align*}
 \int_{ \{|\f\circ\s|>r\} } \left\vert (f\circ\f\circ\s)^{\prime}\right\vert\,d\mu & \le
 \int_{ \{ |\f\circ\s| >r\} }\left\vert \left(
 \frac{1}{N} \sum\limits_{k=1}^{N} f_{r_{k}} \circ\f
 \circ\s\right)^{\prime} - (f\circ\f\circ\s)^{\prime}\right\vert
\,d\mu
\\
 & \qquad\qquad + \int_{ \{ |\f\circ\s| >r\} } \left\vert \left(	 \frac{1}{N}\sum\limits_{k=1}^{N}f_{r_{k}}\circ\f\circ\s\right)
 ^{\prime}\right\vert\,d\mu
\\
 & \leq\frac{\e}{2} + \int_{ \{ |\f\circ\s| >r\}  }\left\vert \left(  \frac{1}{N} \sum_{k=1}^{N}f_{r_{k}}^{\prime}\circ\f\circ\s\right)
 \left( \f\circ\s\right)^{\prime}\right\vert\,d\mu
\\
 & \leq\frac{\e}{2}+\left\Vert \frac{1}{N}\sum\limits_{k=1}^{N}f_{r_{k}}^{\prime} \right\Vert
 _{H^{\infty}}\,\int_{ \{ |\f\circ\s| >r\} }\left\vert \left(
 \f\circ\s\right)	^{\prime}\right\vert
\,d\mu\leq\e.
\end{align*}
Taking the supremum over all automorphisms $\s$, we obtain \eqref{e2}.
\par\smallskip
\underline{\it Step 3\/}: Finally, in order to see that our condition \eqref{e2} implies
$$
 \lim_{r\to 1} \sup_{\s\in Aut(\D)} \int_{ \{|\f\circ\s|>r\} } |(\f\circ\s)^\#|^p\,d\mu=0\,,
$$
which is (c), it suffices to recall Lemma~\ref{lem-ur}: there exist functions $f$ and $g$ in $\cb$ such that
$$
 |f^\prime(z)|^p + |g^\prime (z)|^p \ge \frac{C}{(1-|z|^2)^p}\,, \quad
 z\in\D\,.
$$
By applying this inequality at the point $\f(\s(z))$ instead of $z$ and then using \eqref{e2}, we see that (c) follows immediately.
\end{proof}
\par\medskip
\begin{ex}
 \label{ex-comp}
Let $1\le p<\infty$ and let $\mu$ be an arbitrary measure (not necessarily finite) that satisfies our axioms. Then for any analytic symbol $\f$ such that $\overline{\f(\D)}$ is a compact subset of $\D$, the operator  $C_\f$ is compact (equivalently, weakly compact) from $\cb$ into $M(D^p_\mu)$. Indeed, condition (c) in Theorem~\ref{thm-cpt-inv} is trivially verified. An obvious example is $\f(z)=a z+b$ with $|a|+|b|<1$, $a\neq 0$.
\end{ex}
\par\medskip
It should be made clear that not every bounded composition operator from $\cb$ into $M(D^p_\mu)$ will be compact so the above theorem describes a non-trivial situation. Here is a very simple example.
\par\medskip
\begin{ex}
 \label{ex-non-comp}
Let $d\mu=(1-|z|^2)^p\,dA(z)$, $1\le p=\a<\infty$. Then the symbol $\f(z)=(1+z)/2$ induces a bounded composition operators from $\cb$ into $M(D^p_\mu)$ which is not compact. Indeed, recall first that in this case $M(D^p_\mu)=\cb$ and that every self-map of the disk induces a bounded composition operator on $\cb$. On the other hand, our operator is not compact because the sequence $(z^n)_n$ is bounded in $\cb$ and converges to zero uniformly on compact subsets of the unit disk but evaluation at the points $z_n=\frac{n-1}{n+1}$ yields
$$
 \|C_\f (z^n)\|_\cb \ge \sup_{z\in \D} (1-|z|^2)\left| \left( \left(\frac{1+z}{2}\right)^n\right )^\prime \right|\geq 2 \left( \frac{n}{n+1}\right )^{n+1} \to \frac2{e} \neq 0\,.
$$
\par\smallskip
\end{ex}
\par\medskip
Related examples of this kind in the specific context of operators from $\cb$ into spaces of $Q_\a$ type can be found in \cite{LNST} or \cite{SZ}.
\par\medskip
\begin{cor}
 \label{cor-subsp}
Suppose that the operator $C_{\f}\,\colon\,\mathcal{B}\rightarrow M(D^p_\mu)$ is bounded. Then the following assertions are equivalent.
\begin{enumerate}
\item
$C_{\f}\,\colon\,\mathcal{B}\rightarrow M(D^p_\mu)$ is not 	 compact.
\item
There exists a subspace $X$ of $\mathcal{B}$ isomorphic to
$l^{\infty}$ such that the restriction of $C_\f$ to $X$ is an isomorphism.
\end{enumerate}
\end{cor}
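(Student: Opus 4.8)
\emph{The plan is to} prove the two implications separately; the implication $(2)\Rightarrow(1)$ is soft, while $(1)\Rightarrow(2)$ carries the content. For $(2)\Rightarrow(1)$ I would argue by contradiction. Suppose there is a subspace $X\subset\cb$ with $X$ isomorphic to $l^{\infty}$ such that $C_\f|_X$ is an isomorphism onto its image, and suppose also that $C_\f$ is compact. Then $C_\f|_X$ is compact, and since it has a bounded inverse on its range, the identity $I_X=(C_\f|_X)^{-1}\circ C_\f|_X$ is compact. This forces $X$ to be finite dimensional, which is absurd because $X\cong l^{\infty}$. Hence $C_\f$ cannot be compact.

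For the reverse implication, the first move is to replace ``not compact'' by ``not weakly compact''. By Theorem~\ref{thm-cpt-inv}, for a bounded operator $C_\f\colon\cb\to M(D^p_\mu)$ compactness and weak compactness coincide, so the hypothesis in $(1)$ gives that $C_\f$ is not weakly compact. The second move is to transfer the problem to $l^{\infty}$. Recall from the proof of Lemma~\ref{lem-b-s} that there is an isomorphism $J\colon l^{\infty}\to\cb$ (Lusky). The composition $C_\f\circ J\colon l^{\infty}\to M(D^p_\mu)$ is then not weakly compact either, since otherwise $C_\f=(C_\f J)\circ J^{-1}$ would be weakly compact, the class of weakly compact operators being a two-sided operator ideal.

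The heart of the argument is the third move, where I would invoke Rosenthal's theorem on operators defined on $l^{\infty}$: any bounded operator from $l^{\infty}$ into a Banach space that fails to be weakly compact is already an isomorphism when restricted to some subspace of $l^{\infty}$ that is itself isomorphic to $l^{\infty}$. Applied to $C_\f\circ J$, this produces a subspace $Z\subset l^{\infty}$ with $Z\cong l^{\infty}$ on which $(C_\f J)|_Z$ is an isomorphism onto its image. Setting $X:=J(Z)\subset\cb$, the map $J|_Z$ is an isomorphism of $Z$ onto $X$, so $X\cong l^{\infty}$, and the identity $C_\f|_X=(C_\f J)|_Z\circ(J|_Z)^{-1}$ exhibits $C_\f|_X$ as a composition of two isomorphisms onto their ranges, hence itself an isomorphism onto its image. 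This is exactly $(2)$.

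\emph{The main obstacle} is the deep Banach-space input in the third move: Rosenthal's $l^{\infty}$-theorem is genuinely non-trivial (the analogous statement for $C(K)$ and $c_0$ due to Pe\l czy\'nski is weaker), and it is this result, rather than any complex-analytic estimate, that supplies the copy of $l^{\infty}$. The role of complex analysis and of Theorem~\ref{thm-cpt-inv} is precisely to reduce the problem to the weak-compactness hypothesis needed to trigger Rosenthal's theorem, while Lusky's identification $\cb\cong l^{\infty}$ is what lets one work inside $l^{\infty}$ and then pull the resulting subspace back into $\cb$.
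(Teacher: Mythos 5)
Your proof is correct and follows essentially the same route as the paper: Lusky's isomorphism $\cb\cong l^{\infty}$, the equivalence of compactness and weak compactness from Theorem~\ref{thm-cpt-inv}, and Rosenthal's theorem on non-weakly-compact operators defined on $l^{\infty}$ (the paper cites this via Diestel--Uhl), with the subspace pulled back through the isomorphism. The only cosmetic difference is that you prove the easy implication by the elementary compact-identity argument rather than invoking the ``if and only if'' form of the cited theorem.
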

\begin{proof}
It suffices to apply the fact already mentioned that $\cb$ is isomorphic to $l^{\infty}$ as a Banach space and the following result \cite[Ch.~VI, Cor.~3, p.~149]{DU}: an operator $T$ defined in $l^{\infty}$ is not weakly compact if and only if there exists a subspace $X$ of $\,l^{\infty} $ isomorphic to $l^{\infty}$ and such that $T|_X$ is an isomorphism.
\end{proof}
\par\medskip
It is natural to ask whether there is a version of Proposition~\ref{prop-bded-basic} for compact operators into $D^p_\mu$. Such a result can be easily proved by following and simplifying an easy part of the proof of Theorem~\ref{thm-cpt-inv}. It also shows that in this case examples like the last one are not possible.
\par\medskip
\begin{thm}
 \label{thm-cpt-d_p}
Let $1\leq p<\infty$. If the composition operator $C_\f$ is bounded from $\cb$ to $D^p_\mu$, it is also compact.
\end{thm}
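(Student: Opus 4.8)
The plan is to mimic the easy implication (c)$\Rightarrow$(a) of Theorem~\ref{thm-cpt-inv}, but now with the identity automorphism in place of the full family $Aut(\D)$, exploiting the fact that boundedness into $D^p_\mu$ already supplies an integrable majorant. First I would invoke Proposition~\ref{prop-bded-basic}: the hypothesis that $C_\f$ is bounded from $\cb$ into $D^p_\mu$ is equivalent to
$$
 M:=\int_\D |\f^\#|^p\,d\mu<\infty\,.
$$
This single finiteness is the whole engine of the proof.

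Next I would take an arbitrary sequence $(f_n)_n$ in $\cb$ with $\|f_n\|_\cb\le 1$ and, by a normal families argument, pass to a subsequence $(g_n)_n$ converging uniformly on compact subsets of $\D$ to some $g$; as in the proof of Theorem~\ref{thm-cpt-inv}, the invariant-derivative bound forces $g\in\cb$ with $\|g\|_\cb\le 1$. To prove compactness it suffices to show that $C_\f g_n\to C_\f g$ in $D^p_\mu$. The evaluation term is harmless, since $g_n(\f(0))\to g(\f(0))$ by pointwise convergence, so I only need to control
$$
 \int_\D |(g_n-g)^\prime\circ\f|^p\,|\f^\prime|^p\,d\mu\,.
$$

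Here I would simply apply the dominated convergence theorem. From $\|g_n-g\|_\cb\le 2$ we get $|(g_n-g)^\prime(w)|\le 2/(1-|w|^2)$ for all $w\in\D$, whence the integrand is dominated, for every $n$, by
$$
 |(g_n-g)^\prime\circ\f|^p\,|\f^\prime|^p\le 2^p\,\frac{|\f^\prime|^p}{(1-|\f|^2)^p}=2^p\,|\f^\#|^p\,,
$$
which lies in $L^1(\mu)$ by the displayed finiteness of $M$. On the other hand, Weierstrass's theorem guarantees that $g_n^\prime\to g^\prime$ uniformly on compact subsets of $\D$, so for each fixed $z$ the quantity $(g_n-g)^\prime(\f(z))$ tends to $0$; thus the integrand converges to $0$ pointwise on $\D$. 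Dominated convergence then gives that the integral tends to $0$, and combined with the evaluation term this yields $\|C_\f g_n-C_\f g\|_{D^p_\mu}\to 0$. Since every norm-bounded sequence thus has a subsequence whose image converges, $C_\f$ is compact.

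I do not expect a genuine obstacle here: the point is conceptual rather than technical. Unlike Theorem~\ref{thm-cpt-inv}, where one must control a supremum over all of $Aut(\D)$ and the tail condition (c) is an extra hypothesis, here the mere integrability of $|\f^\#|^p$ against $\mu$ furnishes the dominating function outright, so compactness comes ``for free''. If one prefers to avoid invoking dominated convergence, the same conclusion follows by splitting the integral over $\{|\f|\le r_0\}$ and $\{|\f|>r_0\}$ exactly as in the (c)$\Rightarrow$(a) step, using that $\int_{\{|\f|>r\}}|\f^\#|^p\,d\mu\to 0$ as $r\to1$ (again a consequence of integrability) to make the tail small, and the uniform convergence of $(g_n-g)^\prime$ on $\{|w|\le r_0\}$ to handle the main part.
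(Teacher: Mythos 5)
Your proof is correct, and it runs on exactly the same engine as the paper's: Proposition~\ref{prop-bded-basic} turns boundedness into the single integrability statement $\int_\D|\f^\#|^p\,d\mu<\infty$, which then dominates the integrands $|(g_n-g)^\prime\circ\f|^p\,|\f^\prime|^p$ by $2^p|\f^\#|^p$. The difference is one of packaging. The paper first applies dominated convergence to $|\f^\#|^p\chi_{\{|\f|\le r_n\}}$ to convert integrability into the vanishing-tail condition $\lim_{r\to1}\int_{\{|\f|>r\}}|\f^\#|^p\,d\mu=0$ --- that is, condition (c) of Theorem~\ref{thm-cpt-inv} specialized to the identity automorphism --- and then retraces the split-integral argument of (c)$\Rightarrow$(a). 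You instead apply dominated convergence in one shot to the sequence $C_\f g_n - C_\f g$, which is legitimate precisely because, with a single fixed symbol and no supremum over $Aut(\D)$, pointwise convergence plus one fixed $L^1(\mu)$ majorant is all the theorem needs; this shortcut is exactly what is unavailable in Theorem~\ref{thm-cpt-inv} itself, where the supremum over automorphisms is incompatible with a pointwise-limit argument and forces the tail-condition formulation. Your route is a bit shorter and fully self-contained; the paper's route makes structurally explicit that boundedness into $D^p_\mu$ automatically delivers the compactness criterion of the general theorem, which is the conceptual point behind the ``for free'' phenomenon. Your closing alternative --- splitting over $\{|\f|\le r_0\}$ and $\{|\f|>r_0\}$ and using the tail condition --- is, almost verbatim, the paper's actual proof.
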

\begin{proof}
Let $C_\f$ be bounded from $\cb$ into $D^p_\mu$. By Proposition~\ref{prop-bded-basic}, we know that
$\int_\D |\f^\#|^p\,d\mu<\infty$.
\par
Let $(r_n)_n$ be an increasing sequence with $\lim_n r_n=1$ and define $$
 \O_n=\{z\in \D\,\colon\,|\f(z)|\le r_n \}\,.
$$
Note that $\Omega_n$ is an ascending chain in the sense of inclusion whose union is $\D$. Denoting by $\chi_{\Omega_n}$ the characteristic function of $\Omega_n$, it is clear that
$|\f^\#|^p\,\chi_{\Omega_n}$ converges to $|\f^\#|^p$ pointwise and $|\f^\#|^p\,\chi_{\Omega_n}\le |\f^\#|^p$ in $\D$. By the Lebesgue dominated convergence theorem, we have
$$
 \lim_{n\to\infty} \int_{\Omega_n} |\f^\#|^p\,d\mu = \lim_{n\to\infty} \int_\D  |\f^\#|^p\chi_{\Omega_n}\,d\mu = \int_\D  |\f^\#|^p\,d\mu\,.
$$
This shows that
$$
 \lim_{n\to\infty} \int_{ \{|\varphi|>r_n\} } |\f^\#|^p\,d\mu=0 \,.
$$
Now one can just retrace the steps of the proof of the implication (c)\,$\Rightarrow$\,(a) in Theorem~\ref{thm-cpt-inv} and simplify them (without taking the supremum and working only with the identity automorphism) to see that the last  condition is sufficient for compactness of $C_\f$.
\end{proof}
\par\medskip
\section{Operators from the Bloch space into the little spaces}
 \label{sec-little-sp}
\par
The purpose of this section is to characterize the bounded and compact operators from $\cb$ into the small spaces $M_0(D^p_\mu)$. The proof of this characterization will require some ``obvious'' properties such as separability of $M_0(D^p_\mu)$ which are well known to hold in the classical ``little spaces'' like VMOA, $\cb_0$, and $Q_{\a,0}$. For example, this property is fulfilled whenever the polynomials are dense in the space. However, in our general context separability has to be checked and it turns out that a complete and rigorous proof of this fact is somewhat involved.
\par
In what follows, we shall typically (but not exclusively) consider positive measures $\mu$ of the form  $d\mu(z)=h(|z|)\,dA(z)$, where $h\in [0,1)\to [0,\infty)$ is an integrable function. Moreover, we shall assume that there exist positive constants $\a$ and $C$ such that
\begin{equation}
 h(|\s(z)|)\le C\,h(|z|)|\,\s^\prime(z)|^{\a}
 \label{cond-h}
\end{equation}
for all $z\in \D$ and all $\s\in Aut(\D)$. Then the induced measure $\mu$ is finite. We will refer to such $\mu$ as the \textit{radial measure induced by\/} $h$. We remind the reader that the definitions of all classical conformally invariant spaces involve measures of this type.
\par
Let us agree to write
\begin{equation}
 h_\s:= (h\circ|\s|) |\s^\prime|^{2-p}.
 \label{g_s}
\end{equation}
Using the standard change of variable: $z=\s(w)$, $dA(z)= |\s^\prime (w)|^2\,dA(w)$, it is easy to verify the identity
\begin{equation}
 \int_\D|(f\circ\s^{-1})^\prime(z)|^pd\mu(z)=\int_\D |f^\prime(w)|^ph_\s(w)\,dA(w)
 \label{change_of_variable}
\end{equation}
for every function $f$ in $M(D^p_\mu)$.
\par
The first natural question is: when does $M_0(D^p_\mu)$ contain the polynomials?
\par\smallskip
\begin{prop}
 \label{prop-polyn-small}
Let $\mu$ be a radial measure induced by an integrable,  non-negative, radial function $h$. Then the following statements are equivalent:
\begin{itemize}
\item[(a)]
The identity function, given by $f(z)=z$, belongs to $M_0(D^p_\mu)$.
\item[(b)]
All polynomials belong to $M_0(D^p_\mu)$.
\item[(c)]
The following two conditions hold simultaneously:
\begin{equation}
 \sup_\s \int_\D h_\s(w)dA(w)<\infty\,, \quad \lim_{\s\in Aut(\D),\,|\s(0)|\to 1} \int_\D h_\s(w)dA(w) =0\,.
 \label{polyn-small}
\end{equation}
\end{itemize}
\end{prop}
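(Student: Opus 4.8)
The plan is to convert every condition into an integral of the form $\int_\D |f'|^p h_\s\,dA$ via the change of variable \eqref{change_of_variable}, and then to read off all three statements from the single case $f(z)=z$. The equivalence (a)\,$\Leftrightarrow$\,(c) will be immediate from this dictionary, (b)\,$\Rightarrow$\,(a) is trivial, and the only implication carrying real content is (c)\,$\Rightarrow$\,(b).

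The first step is to build the dictionary. The identity \eqref{change_of_variable} is in fact valid for every analytic $f$ (both sides lying in $[0,\infty]$), and replacing $\s$ by $\s^{-1}$ turns it into
$$\int_\D |(f\circ\s)'|^p\,d\mu = \int_\D |f'|^p\,h_{\s^{-1}}\,dA\,.$$
Since $\s\mapsto\s^{-1}$ is a bijection of $Aut(\D)$, taking the supremum over all automorphisms shows (the term $|f(0)|^p$ being harmless) that $f\in M(D^p_\mu)$ precisely when $\sup_\s\int_\D |f'|^p h_\s\,dA<\infty$. For the little space I would first record that $|\s(0)|=|\s^{-1}(0)|$ for every $\s\in Aut(\D)$: this follows by solving \eqref{autom} for $\s^{-1}$, or from the fact that $\s$ is a hyperbolic isometry. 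Hence $|\s(0)|\to 1$ is equivalent to $|\s^{-1}(0)|\to 1$, and the same relabelling gives that $f\in M_0(D^p_\mu)$ precisely when, in addition, $\lim_{|\s(0)|\to 1}\int_\D |f'|^p h_\s\,dA=0$.

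With this dictionary in hand, the equivalence (a)\,$\Leftrightarrow$\,(c) is immediate: for $f(z)=z$ one has $f'\equiv 1$, so $\int_\D |f'|^p h_\s\,dA=\int_\D h_\s\,dA$, and the two requirements in \eqref{polyn-small} say exactly that this function lies in $M(D^p_\mu)$ and satisfies the defining $M_0$-condition. The implication (b)\,$\Rightarrow$\,(a) is trivial, the identity being a polynomial. For (c)\,$\Rightarrow$\,(b) I would use that the derivative of a polynomial $P$ is bounded on $\D$, so that $\int_\D |P'|^p h_\s\,dA\le \|P'\|_{H^\infty}^p\int_\D h_\s\,dA$ for every $\s$; the first part of \eqref{polyn-small} then bounds the supremum and the second part forces the limit as $|\s(0)|\to 1$ to vanish, whence $P\in M_0(D^p_\mu)$.

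The only delicate point — and the main obstacle — is the bookkeeping in the dictionary: one has to keep track of the inverse produced by \eqref{change_of_variable} and check that passing from $\s$ to $\s^{-1}$ affects neither the supremum nor the limit $|\s(0)|\to 1$. The identity $|\s(0)|=|\s^{-1}(0)|$ is exactly what makes the $M_0$-condition symmetric under this substitution, and without it the translation of the little-space condition into a statement about $h_\s$ would be unjustified. Everything else is routine; in particular the finiteness of $\mu$ (guaranteed a priori by \eqref{cond-h}) also drops out of the first half of \eqref{polyn-small}, since taking $\s$ to be the identity gives $\mu(\D)=\int_\D h_{\mathrm{id}}\,dA\le\sup_\s\int_\D h_\s\,dA$.
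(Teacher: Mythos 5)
Your proof is correct and takes essentially the same approach as the paper: both convert all three conditions into statements about $\int_\D |f'|^p h_\s\,dA$ via \eqref{change_of_variable}, dispose of (b)\,$\Rightarrow$\,(a) trivially, and get (c)\,$\Rightarrow$\,(b) from the uniform boundedness of polynomial derivatives on $\D$ (the paper uses $|{(z^n)}'|\le n$ for monomials, you use $\|P'\|_{H^\infty}$ for a general polynomial). Your explicit identity $|\s(0)|=|\s^{-1}(0)|$ is just a sharper form of the bookkeeping the paper records as ``$|\s^{-1}(0)|\to 1$ if and only if $|\s(0)|\to 1$'' together with the fact that inversion is a bijection of $Aut(\D)$.
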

\begin{proof}
Formula \eqref{change_of_variable} readily implies that the identity function, given by $f(z)=z$, belongs to $M_0(D^p_\mu)$ if and only if \eqref{polyn-small} holds.
\par
Trivially, if all polynomials belong to $M_0(D^p_\mu)$ then so does $f(z)=z$. If the identity is in $M_0(D^p_\mu)$ then \eqref{polyn-small} holds, and choosing $f(z)=z^n$ we get
$$
 \sup_\s \int_\D|(f\circ \s^{-1})^\prime|^pd\mu=\sup_\s \int_\D
 |f^\prime|^ph_\s dA\leq n^p \sup_\s \int_\D h_\s dA<\infty
$$
and
\begin{eqnarray*}
 \lim_{|\s(0)|\to 1}\int_\D|(f\circ \s^{-1})^\prime|^pd\mu &=&
 \lim_{|\s(0)|\to 1}\int_\D|f^\prime(w)|^ph_\s(w)dA(w)\\
 &\leq &n^p\lim_{|\s(0)|\to 1}
 \int_\D h_\s(w)dA(w)=0.
\end{eqnarray*}
It is easy to see that $|\s^{-1}(0)|\to 1$ if and only if $|\s(0)|\to 1$. Recalling also the obvious fact that $\{\s^{-1}\,\colon\,\s\in Aut(\D)\}=Aut(\D)$, the statement follows.
\end{proof}
\par\medskip
\begin{thm} \label{thm-dens-polyn}
Let $\mu$ be a radial measure induced by an integrable, non-negative, radial function $h$ that satisfies \eqref{cond-h}; suppose also that the identity function belongs to $M_0(D^p_\mu)$. Let the dilations $f_r$ be defined as in \eqref{dilat}. Then the following statements are equivalent:
\begin{itemize}
\item[(a)] The function $f$ belongs to $ M_0(D^p_\mu)$;
\item[(b)] $\lim_{r\to1} \Vert f-f_r\Vert_{M(D^p_\mu)}=0.$
\item[(c)] $f$ belongs to the closure of the polynomials in $M(D^p_\mu)$.
\end{itemize}
\end{thm}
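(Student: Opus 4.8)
The plan is to establish the cycle (c)$\Rightarrow$(a)$\Rightarrow$(b)$\Rightarrow$(c), where the two outer implications are soft and (a)$\Rightarrow$(b) carries essentially all of the analytic content.

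\medskip\noindent\textbf{The two easy implications.} For (c)$\Rightarrow$(a): since the identity belongs to $M_0(D^p_\mu)$ by hypothesis, Proposition~\ref{prop-polyn-small} shows that \emph{every} polynomial lies in $M_0(D^p_\mu)$; as $M_0(D^p_\mu)$ is a closed subspace of $M(D^p_\mu)$, the closure of the polynomials is contained in $M_0(D^p_\mu)$, which is exactly (a). For (b)$\Rightarrow$(c): for fixed $r<1$ the dilation $f_r$ of \eqref{dilat} extends analytically past $\overline{\D}$, so its Taylor partial sums $S_N$ satisfy $\|(f_r-S_N)'\|_{H^\infty(\D)}\to 0$. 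Writing $M:=\sup_{\s}\int_\D|\s'|^p\,d\mu=\sup_\s\int_\D h_\s\,dA$, which is finite because the identity lies in $M(D^p_\mu)$ (this is the first part of \eqref{polyn-small}), I control the seminorm by $s(f_r-S_N)^p\le\|(f_r-S_N)'\|_{H^\infty(\D)}^p\,M\to 0$. Hence each $f_r$ lies in the closure of the polynomials, and since $\|f-f_r\|_{M(D^p_\mu)}\to 0$ by (b), so does $f$.

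\medskip\noindent\textbf{Reduction of (a)$\Rightarrow$(b).} Assume $f\in M_0(D^p_\mu)$. Since $f_r(0)=f(0)$, it suffices to prove $s(f-f_r)\to 0$. By the change-of-variable identity \eqref{change_of_variable}, $\int_\D|((f-f_r)\circ\s)'|^p\,d\mu=\int_\D|f'-rf'(r\,\cdot)|^p\,h_{\s^{-1}}\,dA$, so the goal is $\sup_{\s}\int_\D|(f-f_r)'|^p\,h_{\s^{-1}}\,dA\to 0$ as $r\to 1$. I record three facts obtained from \eqref{change_of_variable}: from $\mathrm{id}\in M_0$, (F1)~$\int_\D h_\s\,dA\to 0$ as $|\s(0)|\to 1$, and $\le M$ always; from $f\in M_0$, (F2)~$\int_\D|f'|^p h_{\s^{-1}}\,dA=\int_\D|(f\circ\s)'|^p\,d\mu\to 0$ as $|\s(0)|\to 1$; and (F3)~$\int_\D|f'|^p\,d\mu<\infty$, so its tails over $\{|w|>\rho\}$ vanish as $\rho\to 1$.

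\medskip\noindent\textbf{The splitting.} Given $\e>0$, I split $Aut(\D)$ according to whether $|\s(0)|\le 1-\d$ or $|\s(0)|>1-\d$, choosing $\d$ by F1 and F2 so that the near-boundary automorphisms satisfy $\int_\D h_\s\,dA<\e$ and $\int_\D|(f\circ\s)'|^p\,d\mu<\e$. For \emph{bounded} automorphisms the Schwarz--Pick identity \eqref{ident-aut} gives the two-sided bound $\d/2\le|\s'|\le\d^{-2}$, whence \eqref{cond-h} yields $h_{\s^{-1}}(w)\le C_\d\,h(|w|)$; the integral is then $\le C_\d\int_\D|(f-f_r)'|^p\,d\mu$, which tends to $0$ once one knows that dilations converge in the fixed space $D^p_\mu$ (a separate tail-plus-compacta argument using the finiteness of $\mu$ and \eqref{cond-h}). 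For \emph{near-boundary} automorphisms I split the integral by the location of the value $|\s(\,\cdot\,)|$ relative to $\rho$: on $\{|\s|\le\rho\}$ I use $\sup_{|w|\le\rho}|(f-f_r)'(w)|\to 0$ together with F1, and on $\{|\s|>\rho\}$ I bound $|(f-f_r)'|^p\le 2^{p-1}(|f'|^p+|f_r'|^p)$, the $f'$-piece being $<\e$ by F2.

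\medskip\noindent\textbf{The main obstacle.} The one piece that resists is $\int_{\{|\s|>\rho\}}|(f_r\circ\s)'|^p\,d\mu$ for near-boundary $\s$, which I must make small \emph{uniformly in $r$}. Each $f_r\in M_0$, but the rate in the $M_0$-limit depends on $r$; moreover a crude Bloch-type bound on $|f_r'|$ is too lossy precisely for the borderline measures, since it reduces the piece to $\int_{\{|\s|>\rho\}}(1-|w|^2)^{-p}\,d\mu$, which may diverge. I expect this uniform near-boundary tail estimate to be the crux, and plan to overcome it by combining \eqref{cond-h}, the finiteness of $\mu$, and the standing hypothesis $\mathrm{id}\in M_0$ (i.e.\ F1) to show that the dilation operators are uniformly bounded and uniformly tight near the boundary on $M(D^p_\mu)$ — equivalently, that $\{f_r\}$ is relatively compact there. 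Once this uniform estimate is available, assembling the bounded- and near-boundary contributions gives $s(f-f_r)\to 0$, completing (a)$\Rightarrow$(b).
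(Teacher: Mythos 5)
Your two soft implications are fine and coincide with the paper's: (c)$\Rightarrow$(a) via Proposition~\ref{prop-polyn-small} and closedness of $M_0(D^p_\mu)$, and (b)$\Rightarrow$(c) via uniform approximation of $f_r'$ together with $\sup_\s\int_\D h_\s\,dA<\infty$. The genuine gap is in (a)$\Rightarrow$(b), exactly at the point you flag yourself: the estimate of $\int_{\{|\s|>\rho\}}|(f_r\circ\s)'|^p\,d\mu$ for near-boundary $\s$, \emph{uniformly in} $r$, is never proved --- you only announce a plan --- and the tools you list cannot deliver it. For $|\s(0)|$ close to $1$, \eqref{cond-h} only gives $h_{\s^{-1}}\le C\,h\,|(\s^{-1})'|^{2-p+\a}$ with $|(\s^{-1})'|$ unbounded, so no $\s$-independent domination results; F1 concerns the identity function, not $f_r$; and finiteness of $\mu$ says nothing about how mass near the boundary interacts with $|f_r'|^p$. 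Moreover, your fallback --- proving that $\{f_r\}$ is relatively compact in $M(D^p_\mu)$ --- is not a reduction but a restatement: since point evaluations are bounded and $f_r\to f$ pointwise, the subsequence principle shows that relative compactness of the dilation family is equivalent to the very convergence $\lim_{r\to1}\Vert f-f_r\Vert_{M(D^p_\mu)}=0$ you are trying to prove.

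The missing idea, which is the real content of the paper's proof, is to write $f_r$ as a Poisson average of rotations of $f$ and apply Minkowski's integral inequality. This yields, for every $\s\in Aut(\D)$ and every $r\in(0,1)$,
\begin{equation*}
 \int_\D |(f_r\circ\s^{-1})^\prime|^p\,d\mu \;\le\; \sup_{\la\in\T}\int_\D |(f\circ\s_\la^{-1})^\prime|^p\,d\mu\,,
 \qquad \s_\la(z):=\s(\overline{\la}z)\,,
\end{equation*}
which is inequality \eqref{Minkowski} of the paper. Since $|\s_\la(0)|=|\s(0)|$ for all $\la\in\T$, the right-hand side tends to $0$ as $|\s(0)|\to1$ because $f\in M_0(D^p_\mu)$ (this is \eqref{I^p_0_strong}); that is precisely the uniform-in-$r$ near-boundary tightness you could not obtain, and it simultaneously gives $\Vert f_r\Vert_{M(D^p_\mu)}\le\Vert f\Vert_{M(D^p_\mu)}$. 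The paper then organizes (a)$\Rightarrow$(b) by contradiction: along a sequence $\s_n$ violating the conclusion, Hurwitz's theorem lets one assume $\s_n$ converges either to a unimodular constant --- a case killed by \eqref{Minkowski} as above --- or to an automorphism, in which case $|(\s_n^{-1})'|$ is two-sidedly bounded, \eqref{cond-h} dominates $h_{\s_n}$ by a constant multiple of $h$, and one concludes with \eqref{Minkowski2}, Fatou's lemma, and Pratt's generalized dominated convergence theorem; this last regime is where your ``bounded automorphism'' argument does work (note that even there, the dilation convergence in $D^p_\mu$ you take for granted is proved via \eqref{Minkowski2}, i.e.\ again via the Poisson--Minkowski bound specialized to $\s=\mathrm{id}$, not from finiteness of $\mu$ alone). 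So your architecture is salvageable, but only after inserting this inequality, which is the crux and is absent from your proposal.
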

\begin{proof}
Our proof will consist of proving the chain of implications (a)\,$\Rightarrow$\,(b)\,$\Rightarrow$\, (c)\,$\Rightarrow$\,(a).
\par\medskip
\fbox{(a)\,$\Rightarrow$\,(b).} Let $f\in M_0(D^p_\mu)$. The key points is that, by assumption, the identity belongs to $M(D^p_\mu)$. Also, all  dilations $f_r$ have a continuous derivative in the closed disk. However, we will need a uniform bound on their norms in terms of $f$. Using the Poisson's kernel, we can rewrite the function $f_r$ as
$$
 f_r(z)=\frac{1}{2\pi}\int_{\T}f(z\xi)\frac{1-r^2}{|1-r \overline\xi|^2}|d\xi|.
$$
Thus
$$
 f^\prime_r(z)=\frac{1}{2\pi}\int_{\T}\xi f^\prime(z\xi)\frac{1-r^2}{|1-r\overline\xi|^2}|d\xi|.
$$
Fix $\s\in Aut(\D)$. Given $\la\in \T$, let us define $\s_\la (z) :=\s(\overline{\la} z)$, $z\in \D$. Then, applying the equality (\ref{change_of_variable}) and Minkowski's
inequality,
\begin{eqnarray*}
 \left(\int_\D |(f_r\circ\s^{-1})^\prime|^pd\mu\right)^{1/p} &=&\left(\int_\D |f_r^\prime|^p h_\s dA\right)^{1/p}
\\
 &=&\frac{1}{2\pi}\left(\int_\D \left|\int_{\T}\xi f^\prime(z\xi)\frac{1-r^2}{|1-r\overline\xi|^2}|d\xi|\right|^p h_\s(z)dA(z)\right)^{1/p}
\\
 &\leq & \frac{1}{2\pi}\int_{\T} \left(\int_\D \left| \xi f^\prime(z\xi)\right|^p h_\s(z)dA(z) \right)^{1/p} \frac{1-r^2}{|1-r\overline\xi|^2}|d\xi|
\\
 &= & \frac{1}{2\pi}\int_{\T} \left(\int_\D \left|  f^\prime(w)\right|^p h_{\s_\xi}(w)dA(w)\right)^{1/p}\frac{1-r^2}{|1-r\overline\xi|^2}|d\xi|
\\
 &\leq & \frac{1}{2\pi}\int_{\T} \sup_{\la\in \T}\left(\int_\D \left|  f^\prime(w)\right|^p h_{\s_\la}(w)dA(w)\right)^{1/p}\frac{1-r^2}{|1-r\overline\xi|^2}|d\xi|
 \\
 &=&\sup_{\la\in \T}\left(\int_\D \left| f^\prime(w)\right|^p h_{\s_\la}(w)dA(w)\right)^{1/p} \frac{1}{2\pi}\int_{\T} \frac{1-r^2}{|1-r\overline\xi|^2}|d\xi|
\\
 &=&\sup_{\la\in \T}\left(\int_\D \left|  f^\prime(w)\right|^p h_{\s_\la}(w)dA(w)\right)^{1/p}\,.
\end{eqnarray*}
That is, for all $\s\in Aut(\D)$ and all $r\in (0,1)$ we have that
\begin{equation}
 \label{Minkowski}
 \int_\D |(f_r\circ\s^{-1})^\prime|^pd\mu\leq \sup_{\la\in\T}\int_\D |(f\circ\s_\la^{-1})^\prime|^pd\mu.
\end{equation}
It now follows that $\Vert f_r\Vert_{M(D^p_\mu)}\leq \Vert f\Vert_{M(D^p_\mu)}$ since $f_r(0)=f(0)$.
\par
In the  special case when $\s$ is chosen to be the identity, a close inspection of the above long chain of inequalities shows that
\begin{equation}
 \int_\D |f_r^\prime|^pd\mu\leq \int_\D |f^\prime|^pd\mu.
 \label{Minkowski2}
\end{equation}
Using the description  \eqref{autom} of the group of disk automorphisms, it is easy to see that for every function $f\in M_0(D^p_\mu)$ we have
\begin{equation}
 \lim_{|\s(0)|\to1}\left(\sup_{\la\in \T}\int_\D|(f\circ\s_\la^{-1})^\prime|^pd\mu\right)=0.
 \label{I^p_0_strong}
\end{equation}
\par
Assume the contrary of what we want to prove: $\lim_{r\to1} \Vert f-f_r\Vert_{M(D^p_\mu)}\neq 0$. Then there exist a constant $\delta>0$, a sequence of positive numbers $r_n\nearrow 1$, and a sequence of automorphisms of the unit disk $(\s_n)_n$ such that
\begin{equation}
 \left(\int_\D|(f\circ\s_n^{-1})^\prime-(f_{r_n}\circ\s_n^{-1})^\prime|^p \,d\mu\right)^{1/p}\ge \delta
  \label{ge-delta}
\end{equation}
for all $n$.
\par
After passing to a subsequence, we may assume that the sequence $(\s_n)_n$ converges uniformly on compact subsets of the unit disk. By a corollary to Hurwitz's theorem, it converges either to a constant $\lambda\in \T$ or to an automorphism $\s$. We analyze the two cases separately.
\par\smallskip
\underline{\it Case 1\/}. Suppose that $\s_n(0)\to\la$ with $|\la|=1$; then $|\s_n(0)|\to 1$. By (\ref{I^p_0_strong}), we can find $n_0\in\N$ such that
$$
 \sup_{\la\in \T} \left(\int_\D |(f\circ\s_{n,\la}^{-1})^\prime|^p\, d\mu \right)^{1/p} < \delta/4
$$
for all $n\ge n_0$, where $\s_{n,\la} (z)=\s_n(\overline{\la} z)$ as before. From here we deduce by \eqref{Minkowski} that
\begin{multline*}
 \left(\int_\D|(f\circ\s_n^{-1})^\prime-(f_{r_n}\circ\s_n^{-1})^\prime|^p
 \,d\mu\right)^{1/p}
\\
 \le \left(\int_\D|(f_{r_n}\circ\s_n^{-1})^\prime|^pd\mu\right)^{1/p} + \left(\int _\D|(f\circ\s_n^{-1})^\prime|^pd\mu\right)^{1/p}
\\
 \le 2 \sup_{\la\in \T}\left(\int_\D |(f\circ\s_{n,\la}^{-1})^\prime|^p \,d\mu\right)^{1/p}< \delta/2\,,
\end{multline*}
which is in contradiction with \eqref{ge-delta}.
\par\smallskip
\underline{\it Case 2\/}. Suppose $\s_n\to\s$ uniformly on compact sets, $\s\in Aut(\D)$. Then there exists $r\in (0,1)$ such that $|\s_n^{-1}(0)|\leq r$ for all $n\in\N$. Thus,
$$
 \frac{1-r}{1+r}\leq|(\s_n^{-1})^\prime(z)|\leq \frac{1+r}{1-r}
$$
holds for all $z\in \D$. Taking $\a$ to be the same constant as in \eqref{cond-h}, denote by $D$ the following finite positive constant:
$$
D=\max\left\{\left(\frac{1-r}{1+r}\right)^{2-p+\a},  \left(\frac{1+r}{1-r}\right)^{2-p+\a}\right\}\,.
$$
Extend $h$ to $\D$ radially by defining $h(z)=h(|z|)$. By formula (\ref{change_of_variable}) and our hypotheses on $h$, we have
\begin{multline*}
 \int_\D|(f\circ\s_n^{-1})^\prime - (f_{r_n}\circ\s_n^{-1})^\prime|^p \,d\mu = \int_\D|f^\prime-f_{r_n}^\prime|^p\,h_{\s_n}\,d A
\\
 \le C \int_\D |f^\prime(w)-f_{r_n}^\prime(w)|^ph(|w|)|\s_n'(w)|^{2-p+\a} \,d A (w)
\\
 \le C D \int _\D|f^\prime(w)-f_{r_n}^\prime(w)|^p\,h(|w|)\,d A (w)
\\
 \le C D p \int _\D(|f^\prime(w)|^p+|f_{r_n}^\prime(w)|^p)\,h(|w|)\,d A (w).
\end{multline*}
Notice that $|f^\prime|^ph$ belongs to $L^1(\D,dA)$ because $f\in M(D^p_\mu)$ and $|f_{r_n}^\prime|^ph$ also belongs to $L^1 (\D,dA)$ by (\ref{Minkowski2}).
Since $|f_{r_n}^\prime|$ converges pointwise to $|f^\prime|$, Fatou's lemma and again (\ref{Minkowski2}) together imply that
\begin{equation*}
\int_\D|f^\prime|^pd\mu \leq \liminf_n\int_\D|f_{r_n}^\prime|^pd\mu\leq \limsup_n\int_\D|f_{r_n}^\prime|^pd\mu =\int_\D|f^\prime|^pd\mu.
\end{equation*}
Thus,
\begin{equation}
\label{Minkowski+Fatou}
\lim_n\int_\D|f_{r_n}^\prime|^pd\mu=\int_\D|f^\prime|^pd\mu.
\end{equation}
In summary, we know the following:
\begin{itemize}
\item
 $|f_{r_n}^\prime-f^\prime|^p\,h_{\s_n} \leq C D p \,(|f_{r_n}^\prime|^p+|f^\prime|^p) h$ holds throughout $\D$;
\item
 $|f_{r_n}^\prime-f^\prime|^p\,h_{\s_n}\to 0$ pointwise in $\D$ as $n\to\infty$;
\item
 $(|f_{r_n}^\prime|^p+|f^\prime|^p)h\to 2|f^\prime|^p\,h$ pointwise in $\D$;
\item
 $\int_\D (|f_{r_n}^\prime|^p+|f^\prime|^p)\,h\,dA \to \int_\D 2|f^\prime|^p\,h\,dA$ by \eqref{Minkowski+Fatou}.
\end{itemize}
Thus, we are allowed to apply the well-known generalization of the Lebesgue dominated convergence theorem usually called Pratt's lemma \cite[Ch.~11, Sect.~5, p.~232, Prop.~18]{Ro}, obtaining
$$
 \int_\D|(f\circ\s_n^{-1})^\prime-(f_{r_n} \circ \s_n^{-1})^\prime|^p\,d\mu = \int_\D |f^\prime-f_{r_n}^\prime|^p \,h_{\s_n}\,dA \to 0\,, \quad n\to\infty\,,
$$
which again yields a contradiction. This concludes our proof that (a)\,$\Rightarrow$\,(b).
\par\medskip
\fbox{(b)\,$\Rightarrow$\,(c).} Fix $r\in (0,1)$. Since $f_r$ is analytic in a larger disk centered at the origin, there exists a sequence of polynomials $(p_n)_n$ such that $p_n(0)=f(0)$ and
$$
M_n:=\sup\{|f_r^\prime(z)-p_n'(z)|\,\colon\,\, z\in \D\}\to 0 \quad \textrm{as} \quad n\to \infty.
$$
Thus,
\begin{eqnarray*}
\sup_\s \int_\D |(f_r\circ\s^{-1})^\prime - (p_n\circ\s^{-1})^\prime|^pd\mu  &=&\sup_\s \int_\D |f_r^\prime - p_n'|^ph_\s\,dA\\
&\leq& M_n^p\sup_\s \int_\D h_\s dA=M_n^p\,\Vert z\Vert_{M(D^p_\mu)}^p\,.
\end{eqnarray*}
Therefore, $\lim_n\Vert f_r-p_n\Vert_{M(D^p_\mu)}=0$ and $f_r$ belongs to the closure of the polynomials in $M(D^p_\mu)$ for all $r>1.$ Since $\lim_{r\to1} \Vert f-f_r\Vert_{M(D^p_\mu)}=0$, the function $f$ also belongs to the closure of the polynomials.
\par\medskip
\fbox{(c)\,$\Rightarrow$\,(a).} Since the polynomials belong to $M_0(D^p_\mu)$ and $M_0(D^p_\mu)$ is a closed subspace of $M(D^p_\mu)$, it is clear that $f\in M_0(D^p_\mu)$.
\end{proof}
\par\medskip
We would like to remark that here our inspiration for the above result comes from \cite{WX}. Since we are considering compositions with all automorphisms, there are some technical difficulties involved. The radial character of the measure $\mu$ does not seem to guarantee all rotation-invariant properties to hold so it appears necessary to consider the supremum over all compositions with rotations as we have done above or to use some similar argument in order to complete the  proof.
\par\medskip
\begin{cor}
 \label{cor-dens-polyn}
Let $\mu$ a radial measure induced by a positive, radial, integrable function $h$ that satisfies \eqref{cond-h}. If the identity function is in $M_0(D^p_\mu)$ then the polynomials are dense in $M_0(D^p_\mu)$. In particular, the Banach space $M_0(D^p_\mu)$ is separable.
\end{cor}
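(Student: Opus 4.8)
The plan is to derive both assertions directly from Theorem~\ref{thm-dens-polyn} together with Proposition~\ref{prop-polyn-small}, since all the genuine analytic work has already been carried out there; the corollary is essentially a bookkeeping argument about closures followed by a standard countable-approximation step. As a first observation I would record that, under the stated hypotheses, the polynomials actually lie in $M_0(D^p_\mu)$: the assumption that the identity $f(z)=z$ belongs to $M_0(D^p_\mu)$ is precisely condition (a) of Proposition~\ref{prop-polyn-small}, and the equivalence (a)$\Leftrightarrow$(b) of that proposition then guarantees that \emph{every} polynomial belongs to $M_0(D^p_\mu)$. Since the hypotheses of Theorem~\ref{thm-dens-polyn} (a radial, non-negative, integrable $h$ satisfying \eqref{cond-h}, with the identity in $M_0(D^p_\mu)$) coincide exactly with those assumed here, that theorem is available as well.

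For the density statement I would fix an arbitrary $f\in M_0(D^p_\mu)$ and apply the implication (a)$\Rightarrow$(c) of Theorem~\ref{thm-dens-polyn}, which says that $f$ lies in the closure of the polynomials computed in the $M(D^p_\mu)$ norm. The one point requiring attention is that this closure, taken in the large space, coincides with the closure taken inside $M_0(D^p_\mu)$. This is immediate: the polynomials are contained in $M_0(D^p_\mu)$ by the previous paragraph, $M_0(D^p_\mu)$ is a closed subspace of $M(D^p_\mu)$, and the norm on $M_0(D^p_\mu)$ is the restriction of the norm on $M(D^p_\mu)$. Hence the $M(D^p_\mu)$-closure of the polynomials is contained in $M_0(D^p_\mu)$ and, by the theorem, contains all of $M_0(D^p_\mu)$; the two therefore coincide, which is exactly the asserted density.

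For separability I would exhibit a countable dense subset. The collection $P_{\Q}$ of polynomials with coefficients in $\Q+i\Q$ is countable, and for any polynomial $p(z)=\sum_{k=0}^{n} a_k z^k$ the triangle inequality gives $\|p-q\|_{M(D^p_\mu)}\le \sum_{k=0}^{n} |a_k-b_k|\,\|z^k\|_{M(D^p_\mu)}$ whenever $q(z)=\sum_{k=0}^{n} b_k z^k\in P_{\Q}$; since each $\|z^k\|_{M(D^p_\mu)}$ is finite (the monomials being polynomials, hence elements of $M(D^p_\mu)$), $P_{\Q}$ is dense in the set of all polynomials and therefore, by the density just established, dense in $M_0(D^p_\mu)$. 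I expect no serious obstacle in this argument, the substantive content being entirely absorbed into Theorem~\ref{thm-dens-polyn}; the only steps demanding a moment's care are the identification of the two closures above and the preliminary verification via Proposition~\ref{prop-polyn-small} that the polynomials do sit inside $M_0(D^p_\mu)$, which is exactly why invoking that proposition at the outset is essential.
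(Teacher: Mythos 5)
Your proposal is correct and follows exactly the route the paper intends: the corollary is stated without a separate proof precisely because, as you show, it is immediate from Proposition~\ref{prop-polyn-small} (polynomials lie in $M_0(D^p_\mu)$) together with the equivalence (a)$\Leftrightarrow$(c) of Theorem~\ref{thm-dens-polyn}, plus the standard rational-coefficient approximation for separability. Your care in identifying the closure taken in $M(D^p_\mu)$ with the closure taken in the closed subspace $M_0(D^p_\mu)$ is exactly the right bookkeeping step.
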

\par\smallskip
We end the paper with a characterization of bounded and compact composition operators from the Bloch space into the small spaces.
\par\smallskip
\begin{thm} \label{thm-small}
Let $1\leq p<\infty$. Assume that the Banach space $M_0(D^p_\mu)$ is separable. Let  $\f$ be an analytic self-map of the unit disk.
Then the following statements are equivalent:
\begin{itemize}
 \item[(a)]
$C_\f\,\colon\,\cb \to M_0(D^p_\mu)$ is a bounded operator.
 \item[(b)]
$C_\f\,\colon\,\cb \to M_0(D^p_\mu)$ is a compact operator.
 \item[(c)]
Both conditions
$$
 \lim_{|\s(0)|\to 1}\int_\D|(\f\circ\s)^\#|^pd\mu=0 \quad and \quad \sup_{\s\in Aut(\D)}\int_\D| (\f\circ\s)^\#|^p\,d\mu<\infty
$$
are fulfilled.
\end{itemize}
\end{thm}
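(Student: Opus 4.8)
The plan is to establish the cycle (b)$\Rightarrow$(a), (a)$\Leftrightarrow$(c), and finally (a)$\Rightarrow$(b), the last being where the separability hypothesis enters decisively. The implication (b)$\Rightarrow$(a) is immediate, since every compact operator is bounded, so the real content lies in the remaining three arrows.

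For (a)$\Rightarrow$(c), I would first note that boundedness into $M_0(D^p_\mu)$ trivially yields boundedness into $M(D^p_\mu)$, the inclusion $M_0(D^p_\mu)\subset M(D^p_\mu)$ being isometric; hence Proposition~\ref{prop-bded-inv} gives the finiteness condition $\sup_{\s}\int_\D|(\f\circ\s)^\#|^p\,d\mu<\infty$. For the vanishing condition I would exploit that $C_\f$ actually maps \emph{into} the little space: applying this to the two functions $f,g\in\cb$ furnished by Lemma~\ref{lem-ur}, both $f\circ\f$ and $g\circ\f$ lie in $M_0(D^p_\mu)$, so that $\int_\D|(f\circ\f\circ\s)'|^p\,d\mu$ and $\int_\D|(g\circ\f\circ\s)'|^p\,d\mu$ tend to $0$ as $|\s(0)|\to1$. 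Evaluating the pointwise lower bound of Lemma~\ref{lem-ur} at the point $\f(\s(z))$ and multiplying through by $|(\f\circ\s)'|^p$ shows that $C\int_\D|(\f\circ\s)^\#|^p\,d\mu$ is dominated by the sum of these two integrals, whence it vanishes in the limit, giving the second half of (c).

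For (c)$\Rightarrow$(a), the finiteness condition together with Proposition~\ref{prop-bded-inv} gives boundedness $\cb\to M(D^p_\mu)$. It then remains only to check that $C_\f$ lands in $M_0(D^p_\mu)$: for an arbitrary $f\in\cb$ the elementary estimate $(1-|\f\circ\s|^2)|f'(\f\circ\s)|\le\|f\|_\cb$ yields $\int_\D|(f\circ\f\circ\s)'|^p\,d\mu\le\|f\|_\cb^p\int_\D|(\f\circ\s)^\#|^p\,d\mu$, and the right-hand side tends to $0$ as $|\s(0)|\to1$ by the vanishing condition. Hence $f\circ\f\in M_0(D^p_\mu)$ for every $f\in\cb$, and combined with boundedness into $M(D^p_\mu)$ this is precisely (a).

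The heart of the matter is (a)$\Rightarrow$(b), and I expect this to be the main obstacle---though the difficulty is conceptual rather than computational. Here I would invoke the isomorphism $\cb\cong l^\infty$ of Lusky already used in Lemma~\ref{lem-b-s}, together with Corollary~\ref{cor-subsp} (which rests on Theorem~\ref{thm-cpt-inv}). Since (a) furnishes a bounded operator $C_\f\colon\cb\to M(D^p_\mu)$ whose range is contained in $M_0(D^p_\mu)$, if $C_\f$ failed to be compact then Corollary~\ref{cor-subsp} would produce a subspace $X\subset\cb$ isomorphic to $l^\infty$ on which $C_\f$ is an isomorphism onto its image. That image would then be a closed subspace of $M_0(D^p_\mu)$ isomorphic to the non-separable space $l^\infty$, contradicting the separability of $M_0(D^p_\mu)$. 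Therefore $C_\f$ is compact as an operator into $M(D^p_\mu)$, and since its range lies in the \emph{closed} subspace $M_0(D^p_\mu)$, it is compact into $M_0(D^p_\mu)$ as well. The key realization is that separability of the target is exactly the obstruction to embedding a copy of $\cb\cong l^\infty$ inside it, so the dichotomy of Corollary~\ref{cor-subsp} forces compactness ``for free''; this is precisely why separability must be assumed and why Theorem~\ref{thm-dens-polyn}, via Corollary~\ref{cor-dens-polyn}, is needed to supply a wide class of examples satisfying it.
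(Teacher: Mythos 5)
Your proposal is correct. The equivalences (a)$\Leftrightarrow$(c) follow the paper's proof essentially verbatim: Proposition~\ref{prop-bded-inv} for the supremum condition plus Lemma~\ref{lem-ur} evaluated at $\f(\s(z))$ for (a)$\Rightarrow$(c), and Proposition~\ref{prop-bded-inv} plus the elementary Bloch estimate for (c)$\Rightarrow$(a). The one place where you genuinely diverge is (a)$\Rightarrow$(b). The paper argues through weak compactness directly: since $\cb$ is isomorphic to $l^\infty$ and $M_0(D^p_\mu)$ is separable, the Diestel--Uhl result makes $C_\f\colon\cb\to M_0(D^p_\mu)$ weakly compact; regarding it as an operator into the larger space $M(D^p_\mu)$ preserves weak compactness, and Theorem~\ref{thm-cpt-inv} then upgrades this to compactness. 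You instead argue by contradiction via Corollary~\ref{cor-subsp}: non-compactness would force $C_\f$ to fix a copy of $l^\infty$ inside $\cb$, and the image --- closed, being the isomorphic image of a complete space --- would be a non-separable subspace of the separable space $M_0(D^p_\mu)$, which is impossible. These are two faces of the same underlying fact, since Corollary~\ref{cor-subsp} is itself proved from the Diestel--Uhl dichotomy together with Theorem~\ref{thm-cpt-inv}; so the mathematical content is identical, and your route is neither more elementary nor more general. What it buys is a sharper picture of exactly where separability enters (it is precisely the obstruction to embedding $l^\infty$ into the target), and it handles explicitly a detail the paper leaves implicit, namely that compactness into $M(D^p_\mu)$ with range in the closed subspace $M_0(D^p_\mu)$ yields compactness into $M_0(D^p_\mu)$. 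Your closing observation is also accurate: Theorem~\ref{thm-dens-polyn} and Corollary~\ref{cor-dens-polyn} are not used in the proof itself but only to certify the separability hypothesis in concrete cases.
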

\begin{proof}
We will first show that the conditions (a) and (b) are equivalent and then also that (a) is equivalent to (c).
\par\medskip
\fbox{(b)\,$\Rightarrow$\,(a)} is trivial.
\par\medskip
\fbox{(a)\,$\Rightarrow$\,(b).} As was already remarked, $\cb$ is isomorphic to $l^\infty$; see, \textit{e.g.}, \cite{Lu}. Therefore every bounded operator from $\cb$ into a separable Banach space is weakly compact (see \cite[Ch.~VI, Cor.~3, p.~149]{DU}, for example). In particular, every bounded operator from $\cb$ into $M_0(D^p_\mu)$ is weakly compact. Since $ M_0(D^p_\mu)$ is a subspace of $M(D^p_\mu)$, it follows that $C_\f\,\colon\, \cb \to M(D^p_\mu)$ is also a weakly compact operator. By Theorem \ref{thm-cpt-inv}, it is compact.
\par\medskip
\fbox{(a)\,$\Rightarrow$\,(c).} By Lemma~\ref{lem-ur}, there exist $f$, $g\in \cb$ and a positive constant $C$ such that
\begin{equation}
 |f^\prime(z)|^p+|g^\prime(z)|^p\geq \frac{C}{(1-|z|^2)^p},\quad \textrm{ for all } z\in \D.
\end{equation}
Thus, given $\s\in Aut(\D)$, we have that
\begin{eqnarray*}
 |(f\circ\f\circ\s)^\prime|^p+|(g\circ\f\circ\s)^\prime|^p&=&
 \left(|f^\prime\circ\f\circ\s|^p+|g^\prime\circ\f\circ\s|^p\right)|(\f\circ \s)^\prime|^p
\\
 &\geq& \frac{C}{(1-|\f\circ\s)|^2)^p}|(\f\circ\s)^\prime|^p\\
 &=&C|(\f\circ\s)^\#|^p\,.
\end{eqnarray*}
Since $C_\f$ maps the Bloch space into $M_0(D^p_\mu)$, the functions $f\circ\f$ and $g\circ\f$ belong to $M_0(D^p_\mu)$ so from the above inequalities we deduce that
$$
 \lim_{|\s(0)|\to 1}\int_\D|(\f\circ\s)^\#|^pd\mu=0\,.
$$
By (a), the operator $C_\f\,\colon\, \cb \to M(D^p_\mu)$ is bounded and, by Proposition~\ref{prop-bded-inv}, we have
$$
 \sup_\s \int_\D|(\f\circ\s)^\#|^pd\mu <\infty\,.
$$
\par
\fbox{(c)\,$\Rightarrow$\,(a)}. Applying again Proposition~\ref{prop-bded-inv}, we conclude that the composition operator $C_\f$ is bounded from the Bloch space into $M(D^p_\mu)$. It is only left to prove that the range of $C_\f$ is contained in the little space $M_0(D^p_\mu)$. To this end, suppose $f\in \cb$. Then
\begin{eqnarray*}
 |(f\circ\f\circ\s)^\prime|^p=|f^\prime\circ\f\circ\s|^p |(\f\circ\s)^\prime|^p & \le & \frac{\Vert f\Vert_{\cb}^p}{(1-|\f\circ\s|^2)^p}|(\f\circ\s)^\prime|^p
\\
 &=& \Vert f\Vert_{\cb}^p|(\f\circ\s)^\#|^p.
\end{eqnarray*}
By our assumption (c), it follows that
$$
\lim_{|\s(0)|\to 1}\int_\D|(f\circ\f\circ\s)^\prime|^pd\mu=0\,,
$$
hence $f\circ\f\in M_0(D^p_\mu)$. This completes the proof.
\end{proof}

\bigskip


\begin{thebibliography}{LNST}

\bibitem{AS}
A.~Aleman and A.-M.~Simbotin (A.-M.~Persson), Estimates in M\"obius invariant spaces of analytic functions, \textit{Complex Var. Theory Appl.\/} \textbf{49} (2004), no. 7-9, 487–-510.

\bibitem{AFP}
J.~Arazy, S.D.~Fisher, and J.~Peetre, M\"obius invariant function spaces, \textit{J. Reine Angew. Math.\/} \textbf{363} (1985), 110--145.

\bibitem{AXZ}
R.~Aulaskari, J.~Xiao, R.H.~Zhao, On subspaces and subsets of $BMOA$ and $UBC$, \textit{Analysis\/} \textbf{15} (1995), no. 2, 101--121.

\bibitem{A}
S.~Axler, The Bergman space, the Bloch space, and commutators of multiplication operators, \textit{Duke Math. J.\/} \textbf{53} (1986), no. 2, 315–-332.

\bibitem{BDL}
J.~Bonet, P.~Doma\'nski, M.~Lindstr\"om, Weakly compact composition operators on analytic vector-valued function spaces, \textit{Ann. Acad. Sci. Fenn. Math.} \textbf{26} (2001), no. 1, 233–-248.

\bibitem{BCM}
P.S.~Bourdon, J.A.~Cima, A.L.~Matheson, Compact	composition operators on BMOA, \textit{Trans. Amer. Math. Soc.\/} \textbf{351} (1999), no. 6, 2183--2196.

\bibitem{CM}
J.A.~Cima, A.L.~Matheson, Weakly compact composition operators on $VMO$, \emph{Rocky Mountain J. Math.} \textbf{32} (2002), Number 3, 937--951.

\bibitem{CHD}
M.D.~Contreras, A.G.~Hern\'andez-D\'{\i}az, Weighted composition operators in weighted Banach spaces of analytic functions, \textit{J. Austral. Math. Soc. (Ser. A)\/} \textbf{69} (2000), no. 1, 41–-60.

\bibitem{CMC}
C.~Cowen and B.~MacCluer, \textit{Composition Operators on Spaces of Analytic Functions\/}, Studies in Advanced Mathematics, CRC Press, Boca Raton 1995.

\bibitem{DJT}
J.~Diestel, H.~Jarchow, A.~Tonge, \textit{Absolutely summing operators}, Cambridge Studies in Advanced Mathematics \textbf{43}, Cambridge University Press, Cambridge 1995.

\bibitem{DU}
J.~Diestel, J.J.~Uhl, Jr., \emph{Vector Measures}, Mathematical Surveys \textbf{15}, American Mathematical Society, Providence, R.I. 1977.

\bibitem{D}
P.L.~Duren, \emph{Theory of $H^p $ Spaces\/}, Academic Press, New~York-London 1970. Reprint: Dover, Mineola, New York 2000.

\bibitem{G}
D.~Girela, Analytic functions of bounded mean oscillation, in: \emph {Complex Function Spaces}, Mekrij\"arvi 1999, Report Series Vol. \textbf{4} (University of Joensuu, Department of Mathematics, Joensuu, 2001), pp. 61-–170.

\bibitem{L}
J. Laitila, Weakly compact composition operators on vector-valued BMOA, \textit{J. Math. Anal. Appl.} \textbf{308} (2005), no. 2, 730-–745.

\bibitem{LNST}
J.~Laitila, P.~Nieminen, E.~Saksman, H.-O.~Tylli, Compact and Weakly Compact Composition Operators on $BMOA$, \textit{Complex Anal. Oper. Theory} \textbf{7} (2013), no. 1, 163–-181.

\bibitem{LLQR}
P.~Lef\`evre, D.~Li, H.~Queff\'elec, L.~Rodr\'{\i}guez-Piazza, Some new properties of composition operators associated with lens maps, \textit{Israel J. Math.\/}  \textbf{195} (2013), no. 2, 801-–824.

\bibitem{Lu}
W.~Lusky, On the isomorphic classification of weighted spaces of holomorphic functions, \textit{Acta Univ. Carolin. Math. Phys.} \textbf{41} (2000), 51--60.

\bibitem{MM}
K.M.~Madigan and A.~Matheson, Compact composition operators on the Bloch space, \emph{Trans. Amer. Math. Soc.\/} \textbf{347} (1995), 2679--2687.

\bibitem{M1}
S.~Makhmutov, On Bloch-to-Besov composition operators, \textit{Proc. Japan Acad. Ser. A Math. Sci.\/} \textbf{72}(1996), no. 10, 232--234.

\bibitem{M2}
S.~Makhmutov, A hyperbolic version of the fundamental theorem of Nevanlinna (Russian), \textit{Dokl. Akad. Nauk\/} \textbf{370} (2000), no. 3, 309--312.

\bibitem{MT}
S.~Makhmutov, M.~Tjani, Composition operators on some M\"obius invariant Banach spaces, \textit{Bull. Austral. Math. Soc.\/} \textbf{62} (2000), no. 1, 1--19.

\bibitem{MR}
A.~Montes-Rodr\'{\i}guez, The essential norm of a composition operator on Bloch spaces, \emph{Pacific J. Math.\/} \textbf{188} (1999), 339--351.

\bibitem{PGX}
F.~P\'erez-Gonz\'alez and J.~Xiao, Bloch-Hardy pullbacks, \emph{Acta Sci. Math. (Szeged)\/} \textbf{67} (2001), 709--718.

\bibitem{P}
K.-M. Perfekt, Duality and distance formulas in spaces defined by means of oscillation, \textit{Ark. Mat.} \textbf{51} (2013), no. 2, 345–-361.

\bibitem{RU}
W.~Ramey, D.~Ullrich, Bounded mean oscillation of Bloch pull-backs, \emph{Math. Ann.} \textbf{291}, (1991), 591--606.

\bibitem{Ro}
H.~Royden, \textit{Real Analysis\/}, Second edition,  Macmillan Publishing Company, New York 1968.

\bibitem{Ru}
W.~Rudin, \textit{Functional Analysis\/}, McGraw-Hill, New York 1973.

\bibitem{Sh}
J.H.~Shapiro, \textit{Composition Operators and Classical Function Theory\/}, Springer-Verlag, New York 1993.

\bibitem{Si}
A.-M.~Simbotin (A.-M.~Persson), \textit{Estimates and Duality in $Q$-Spaces\/}, Licentiate Thesis, Lund University 2004.

\bibitem{Sm}
W.~Smith, Composition operators between some classical spaces of analytic functions, In: \textit{Proceedings of the International Conference on Function Theory\/}, Seoul, Korea (2001), 32--46.

\bibitem{SZ}
W.~Smith and R.~Zhao, Composition operators mapping into the $Q_p$ spaces, \textit{Analysis (Munich)\/} \textbf{17} (1997), no. 2-3, 239--263.

\bibitem{T}
M. Tjani, \textit{Compact composition operators on some M\"obius invariant Banach spaces}, Ph.D. Thesis, Michigan State University, East Lansing, MI 1996, 68 pp.

\bibitem{WX}
K-J.~Wirths, J.~Xiao, Recognizing $Q_{p,0}$ functions per Dirichlet space structure, \textit{Bull. Belg. Math. Soc. Simon Stevin\/} \textbf{8} (2001), no. 1, 47–-59.

\bibitem{X1}
J.~Xiao, Composition operators: $N_\a$ to the Bloch space to $Q_\b$, \emph{Studia Math.} \textbf{139} (2000), no. 3, 245–-260.

\bibitem{X2}
J.~Xiao, \textit{Holomorphic $Q$ Classes}, Lecture Notes in Mathematics \textbf{1767}, Springer-Verlag, Berlin 2001.

\bibitem{Z}
R.~Zhao, Composition operators from Bloch type spaces to Hardy and Besov spaces, \textit{J. Math. Anal. Appl.\/} \textbf{233} (1999), no. 2, 749--766.

\bibitem{Z1}
K.~Zhu, \textit{Operator Theory in Function Spaces\/}, Monographs and Textbooks in Pure and Applied Mathematics \textbf{139}, Marcel Dekker, New York, 1990.

\end{thebibliography}
\end{document}